
\documentclass[letterpaper, 10 pt, conference]{ieeeconf}  

\IEEEoverridecommandlockouts                              
\overrideIEEEmargins

\usepackage{graphics} 
\usepackage{epsfig} 
\usepackage{amsmath} 
\usepackage{amssymb}  

\makeatletter
\let\proof\@undefined                        
\let\endproof\@undefined                  
\makeatother
\usepackage{amsthm}
\usepackage{multirow}
\usepackage{bbm, dsfont}
\usepackage{bm} 
\usepackage{url}
\usepackage[]{algorithm2e}
\usepackage{balance}
\usepackage{color}
\usepackage{subfigure}
\graphicspath{{./Figures/}}
\usepackage{array}
\newcolumntype{L}[1]{>{\raggedright\let\newline\\\arraybackslash\hspace{0pt}}m{#1}}
\newcolumntype{C}[1]{>{\centering\let\newline\\\arraybackslash\hspace{0pt}}m{#1}}
\newcolumntype{R}[1]{>{\raggedleft\let\newline\\\arraybackslash\hspace{0pt}}m{#1}}

\setlength{\abovedisplayskip}{5pt}
\setlength{\belowdisplayskip}{5pt}
\setlength{\textfloatsep}{5pt}
\setlength{\floatsep}{5pt}
\setlength{\intextsep}{5pt}
\setlength{\abovecaptionskip}{5pt}
\setlength{\belowcaptionskip}{5pt}

\newcommand{\R}{\mathbb{R}}

\newtheorem{theorem}{Theorem}
\newtheorem{definition}{Definition}
\newtheorem{assumption}{Assumption}
\newtheorem{lemma}{Lemma}

\newtheorem{remark}{Remark}

\newtheorem{problem}{Problem}

\arraycolsep=3pt

\newcommand{\barr}[1]{\overline{#1}}
\newenvironment{rcases}
{\left.\begin{aligned}}
	{\end{aligned}\right\rbrace}

\DeclareMathOperator*{\diag}{diag}
\DeclareMathOperator*{\vect}{vec}

\title{\LARGE \bf
Optimal Input Design for Affine Model Discrimination with Applications in Intention-Aware Vehicles}

\author{Yuhao Ding$^{\ast}$\;\;\; Farshad Harirchi$^{\ast}$\;\;\; Sze Zheng Yong$^{\ast}$\;\;\; Emil Jacobsen\;\;\; Necmiye Ozay
	\thanks{$^{\ast}$ Equal contribution from these authors.}
	\thanks{Y. Ding, F. Harirchi and N. Ozay are with the Electrical Engineering and Computer Science Department, University of Michigan, Ann Arbor, MI, 48109 (emails: {\tt {\{yuhding,harirchi,necmiye\}@umich.edu}}).  S.Z. Yong is with the School for Engineering of Matter, Transport and Energy, Arizona State University, Tempe, AZ, 85287 (email: {\tt {szyong@asu.edu}}). E. Jacobsen is with the Department of Mathematics, KTH Royal Institute of Technology, Stockholm, Sweden (email: {\tt emiljaco@kth.se}).}
	\thanks{This work was supported by an Early Career Faculty grant from
NASA's Space Technology Research Grants Program, DARPA grant N66001-14-1-4045 and by Toyota Research Institute (``TRI"). TRI provided funds to assist the authors with their research but this article solely reflects the opinions and conclusions of its authors and not TRI or any other Toyota entity.}
}

\begin{document}

\maketitle
\thispagestyle{empty}
\pagestyle{empty}
\begin{abstract}
	This paper considers the optimal design of input signals for the purpose of discriminating among a finite number of affine models with uncontrolled inputs and noise. Each affine model represents a different system operating mode, corresponding to unobserved intents of other drivers or robots, or to fault types or attack strategies, etc. The input design problem aims to find optimal separating/discriminating (controlled) inputs such that the output trajectories of all the affine models are guaranteed to be distinguishable from each other, despite uncertainty in the initial condition and uncontrolled inputs as well as the presence of process and measurement noise. We propose a novel formulation to solve this problem, with an emphasis on guarantees for model discrimination and optimality, in contrast to a previously proposed conservative formulation using robust optimization. This new formulation can be recast as a bilevel optimization problem and further reformulated as a mixed-integer linear program (MILP). Moreover, our fairly general problem setting allows the incorporation of  objectives and/or responsibilities among rational agents. For instance, each driver has to  obey traffic rules, while simultaneously optimizing for safety, comfort and energy efficiency. Finally, we demonstrate the effectiveness of our approach for identifying the intention of other vehicles in several driving scenarios. 
	
\end{abstract}

\section{Introduction} \label{sec:intro}
As cyber-physical systems become increasingly complex, integrated and interconnected, they inevitably have to interact with other systems under partial knowledge of each other's internal state such as intention and mode of operation. For instance, autonomous vehicles and robots must operate without access to the intentions or decisions of nearby vehicles or humans \cite{Sadigh2016,Yong2014}.  Similarly, system behaviors change in the presence of different fault types \cite{Harirchi2015Model,Cheong2015} or attack modes \cite{Pasqualetti2013,Yong2015}, and the true system model is often not directly observed. In both examples, there is a number of possible system behaviors and the objective is to develop methods for discriminating among these models (of system behaviors) based on noisy observed measurements. This is an important problem in statistics, machine learning and systems theory; thus, general techniques for model discrimination can have a significant impact on a broad range of applications. 


\noindent\emph{1) Literature Review:}
The problem of discriminating among a set of linear models appears in a wide variety of research areas such as fault detection, input-distinguishability and mode discernibility of hybrid systems. Approaches in the literature can be categorized into passive and active methods. 
While passive techniques seek the separation of the models regardless of the input \cite{Lou2009Distinguishability,Rosa2011Distinguishability,Harirchi2016Guaranteed,Harirchi2017}, active methods search for an input such that the behaviors of different models are distinct. The focus of this paper is on active model discrimination techniques. The concept of distinguishability of two linear time invariant models was defined in \cite{Grewal1976Identifiability}, while similar concepts were considered in the hybrid systems community for mode discernibility/distinguishability \cite{Babaali2004Observability,Desantis2016Observability}. The problem of model-based active fault detection was also extensively studied, where the goal is to find a small excitation that has a minimal effect on the desired behavior of the system, while guaranteeing the isolation of different fault models \cite{Cheong2015,Simandl2009Active,Nikoukhah2006Auxiliary,Scott2014Input}. 

Specifically in the area of intention identification, passive methods have been investigated in \cite{Yong2014,Ziebart2009Planning,Nguyen2012Capir,Kumar2013Learning} to estimate human behavior or intent,  and the obtained intention estimates are then used for control. The problem of intention identification was also considered for inter-vehicle applications, where a partially observable
Markov decision process (POMDP) framework was proposed to estimate the driver's intention \cite{Lam2014Pomdp}. On the other hand, an active approach for identifying human intention in human-autonomous vehicle interactions 
 was studied in \cite{Sadigh2016}, where the intentions of human drivers were estimated by applying a wide variety of inputs to the autonomous car and by observing the reactions of the human drivers to those excitations.
\\
\noindent\emph{2) Main Contributions:}
{We propose a novel optimization-based method for non-conservative \emph{active} model discrimination through the design of optimal separating/discriminating inputs, which improves on a previous robust optimization formulation \cite{harirchi2017active}, in which the solution is conservative (i.e., the obtained input sequence is still separating but incurs a higher cost), if it exists. 
As in \cite{harirchi2017active}, our formulation applies to a general class of affine models with uncontrolled inputs that extends the class of models considered in \cite{Nikoukhah2006Auxiliary,Scott2014Input}, 
and considers a principled characterization of input and state constraints based on 
responsibilities 
of rational agents. 

We show that the active model discrimination problem can be recast as a bilevel optimization problem, which can then be reformulated as a tractable MILP problem (MIQP, in the case of a quadratic objective function) with the help of Special Ordered Set of degree 1 (SOS-1) constraints \cite{Beale1976Global}. Our approach guarantees the separation among affine models, while being optimal as opposed to the previously proposed conservative approach, but at the cost of higher computational complexity. 
A further feature of our approach is that we do not require or compute an explicit set representation of the states in contrast to existing approaches that consider polyhedral projections \cite{Nikoukhah2006Auxiliary} and zonotopes  \cite{Scott2014Input}, which are known to be rather limiting. Finally, we compare our proposed approach with the conservative approach in \cite{harirchi2017active} in terms of computational complexity and optimality using examples of intention identification of other human-driven or autonomous vehicles approaching at an intersection and during a lane change on a highway. 

\section{Preliminaries} \label{sec:prelim2}
In this section, we introduce some notations and definitions, and describe the modeling framework we consider.
\subsection{ Notation and Definitions}
Let $ x \in \R^n$ denote a vector and $ M \in \R^{n\times m}$ a matrix, with transpose $M^\intercal$ and $M \geq 0$ denotes element-wise non-negativity. The vector norm of  $x$ is denoted by $\| {x} \|_i$ with $i \in \{1,2,\infty\}$, while $\mathbf{0}$, $\mathbbm{1}$ and $\mathbbm{I}$ represent the vector of zeros, the vector of ones and the identity matrix of appropriate dimensions. 
The $\diag$ and $\vect$ operators are defined for a collection of matrices $M_i,i=1,\dotsc,n$ and matrix $M$ as: 
\begin{equation*}
\begin{aligned} &\hspace{-0.3cm} \textstyle\diag_{i=1}^n \{ M_i\} = \begin{bmatrix} M_1 & &\\ & \ddots & \\ & & M_n \end{bmatrix},  & \hspace{-0.2cm} \textstyle\vect_{i=1}^n \{ M_i\} = \begin{bmatrix} M_1 \\ \vdots \\ M_n \end{bmatrix},  \\
&\hspace{-0.3cm} \textstyle\diag_{i,j} \{ M_k\} = \begin{bmatrix} M_i & \mathbf{0} \\  \mathbf{0} & M_j \end{bmatrix},  & \hspace{-0.2cm} \textstyle\vect_{i,j} \{ M_k\} = \begin{bmatrix} M_i  \\ M_j \end{bmatrix},  \\
	& \hspace{-0.3cm}\textstyle\diag_N \{ M\} = \mathbbm{I}_N \otimes M,  & \hspace{-0.2cm}  \textstyle\vect_N \{M\} = \mathbbm{1}_N \otimes M,
	\end{aligned}
\end{equation*}

\noindent where $\otimes$ is the Kronecker product. 

The set of positive integers up to $n$ is denoted by $\mathbb{Z}_n^{+} $, and the set of non-negative integers up to $n$ is denoted by $\mathbb{Z}_n^{0}$.   We will also make use of  Special Ordered Set of degree 1 (SOS-1) constraints\footnote{Off-the-shelf solvers such as Gurobi and CPLEX \cite{gurobi,cplex} can readily handle these constraints, which can significantly reduce the search space for integer variables in branch and bound algorithms.} in our optimization formulations, \hspace{-0.05cm}defined as: 
\begin{definition}[SOS-1 Constraints \cite{gurobi}] \label{def:SOS1}
	A special ordered set of degree 1 (SOS-1) constraint is a set of integer, continuous or mixed-integer scalar variables for which at most one variable in the set may take a value other than zero, denoted as SOS-1: $\{v_1,\hdots,v_N\}$. For instance, if $v_i \neq 0$, then this constraint imposes that $v_j=0$ for all $j \neq i$.
\end{definition}
\subsection{Modeling Framework} \label{sec:model}
Consider $N$ discrete-time affine time-invariant models $\mathcal{G}_i = (A_i,B_i,B_{w,i},C_i,D_i,D_{v,i},f_i,g_i)$, each with states $\bm{\vec{x}}_i \in \mathbb{R}^n$, outputs $z_i \in \mathbb{R}^p$, inputs $\bm{\vec{u}}_i \in \mathbb{R}^m$, process noise $w_i \in \mathbb{R}^{m_w}$ and measurement noise $v_i \in \mathbb{R}^{m_v}$. The models evolve according to the state and output equations:
\begin{align}
	\label{eq:state_eq}
	\bm{\vec{x}}_i(k+1) &= A_i \bm{\vec{x}}_i(k) + B_i\bm{\vec{u}}_i(k) + B_{w,i} {w}_i(k)+f_i,\\
	z_i(k) &= C_i\bm{\vec{x}}_i(k) + D_i\bm{\vec{u}}_i(k) +D_{v,i} {v}_i(k)+g_i. \label{eq:output_eq}
	\end{align}

The initial condition for model $i$, denoted by $\bm{\vec{x}}^0_i=\bm{\vec{x}}_i(0)$, is constrained to a polyhedral set with $c_0$ inequalities:
\begin{align}
\label{eq:bmx0_polytope}
\bm{\vec{x}}^0_i \in \mathcal{X}_0 &= \{\bm{\vec{x}}\in\R^n: P_0\bm{\vec{x}} \leq p_0\}, \; \forall i \in \mathbb{Z}_N^+.
\end{align}
%
The first $m_u$ components of $\bm{\vec{u}}_i$ are controlled inputs, denoted as $u \in \mathbb{R}^{m_u}$, which are equal for all $\bm{\vec{u}}_i$, while the other $m_d=m-m_u$ components of $\bm{\vec{u}}_i$, denoted as $d_i \in \mathbb{R}^{m_d}$, are uncontrolled inputs that are model-dependent. Further, the states $\bm{\vec{x}}_i$ are partitioned into $x_i \in \mathbb{R}^{n_x}$ and $y_i \in \mathbb{R}^{n_y}$, where $n_y=n-n_x$, as follows:  
\begin{equation}
	\label{eq:bold}
	\bm{\vec{u}}_i(k)=\begin{bmatrix} u(k) \\ d_i(k) \end{bmatrix}, \bm{\vec{x}}_i(k)=\begin{bmatrix} x_i(k) \\ y_i(k) \end{bmatrix}.
	\end{equation}

The states $x_i $ and $y_i$ represent the subset of the states $\bm{\vec{x}}_i$ that are the `responsibilities' of the controlled and uncontrolled inputs, $u$ and $d_i$, respectively. The term `responsibility' in this paper is to be interpreted as $u$ and $d_i$, respectively, having to independently satisfy the following polyhedral state constraints (for $k \in \mathbb{Z}_T^{+}$) with $c_x$ and $c_y$ inequalities:
\begin{align}
\label{eq:x_polytope}
x_i(k) \in \mathcal{X}_{x,i} &= \{ x\in\R^{n_x}: P_{x,i}x \leq p_{x,i}\}, \\
\label{eq:y_polytope}
y_i(k) \in \mathcal{X}_{y,i} &= \{ y\in\R^{n_y}: P_{y,i}y \leq p_{y,i}\},
\end{align}
subject to constrained inputs described by polyhedral sets (for $k \in \mathbb{Z}_{T-1}^{0}$) with $c_u$ and $c_d$ inequalities, respectively:
\begin{align}
\label{eq:u_polytope}
u(k) \in \mathcal{U} &= \{ u\in\R^{m_u}: Q_uu \leq q_u\}, \\
\label{eq:d_polytope}
{d}_i(k) \in \mathcal{D}_i &= \{ d\in\R^{m_{d_i}}: Q_{d,i}d \leq q_{d,i}\}.
\end{align}

On the other hand, the process noise  $w_i$  and measurement noise $v_i$ 
are also polyhedrally constrained with $c_w$ and $c_y$ inequalities, respectively:
\begin{align}  
\label{eq:w_polytope}
w_i(k) \in \mathcal{W}{_i} &= \{ w\in\R^{m_w}: Q_{w,{i} } w\leq q_{w,i}\},\\
\label{eq:v_polytope}
v_i(k) \in \mathcal{V}{_i}  &= \{ v\in\R^{m_v}: Q_{v,{i} } v \leq q_{v,i}\},
\end{align}
and have no responsibility to satisfy any state constraints.

Using the above partitions of states and inputs, the corresponding partitioning of the state and output equations in \eqref{eq:state_eq} and \eqref{eq:output_eq} are:

{\small \vspace*{-0.4cm}\begin{align}
\label{eq:state_eq_2}
&\begin{array}{rl}
\bm{\vec{x}}_i(k+1) &=  \begin{bmatrix}
  A_{xx,i} \quad A_{xy,i} \\
  A_{yx,i} \quad A_{yy,i}
\end{bmatrix}\bm{\vec{x}}_i(k) + \begin{bmatrix}
  B_{xu,i} \quad B_{xd,i} \\
  B_{yu,i} \quad B_{yd,i}
\end{bmatrix}\bm{\vec{u}}_i(k) \\
&\quad +\begin{bmatrix}
B_{xw,i} \\
B_{yw,i}
\end{bmatrix}w_i(k) + \begin{bmatrix} f_{x,i} \\ f_{y,i} \end{bmatrix}.
\vspace*{-0.4cm}\end{array}\\
&z_i(k)= C_i \bm{\vec{x}}_i(k) + \begin{bmatrix} D_{u,i} \quad D_{d,i} \end{bmatrix} \bm{\vec{u}}_i(k) +D_{v,i}v_{i}(k)+ g_i.
\end{align}}\normalsize
\noindent Further, we will consider a time horizon of length $T$ and introduce some time-concatenated notations. The time-concatenated states and outputs are defined as 
\begin{align*}
\bm{\vec{x}}_{i,T} &= \textstyle\vect_{k=0}^T \{ \bm{\vec{x}}_{i}(k)\}, \quad x_{i,T}= \textstyle\vect_{k=0}^T \{ {x}_{i}(k)\},\\ y_{i,T}&= \textstyle\vect_{k=0}^T \{ {y}_{i}(k)\}, \quad z_{i,T}= \textstyle\vect_{k=0}^T \{ {z}_{i}(k)\},
\end{align*}
while the time-concatenated inputs and noises are defined as

{\small \vspace*{-0.4cm}\begin{align*}
 \bm{\vec{u}}_{i,T} \hspace{-0.09cm}&=\hspace{-0.09cm} \textstyle\vect_{k=0}^{T-1} \hspace{-0.05cm} \{ \bm{\vec{u}}_{i}(k)\}, u_{T}\hspace{-0.09cm}=\hspace{-0.09cm} \textstyle\vect_{k=0}^{T-1} \hspace{-0.05cm}\{ {u}(k)\}, d_{i,T} \hspace{-0.09cm}=\hspace{-0.09cm} \textstyle\vect_{k=0}^{T-1} \hspace{-0.05cm}\{ {d}_{i}(k)\},\\
	{w}_{i,T} \hspace{-0.09cm}&=\hspace{-0.09cm} \textstyle\vect_{k=0}^{T-1} \hspace{-0.05cm} \{ {w}_{i}(k)\}, {v}_{i,T} \hspace{-0.09cm}=\hspace{-0.09cm} \textstyle\vect_{k=0}^{T} \hspace{-0.05cm} \{ {v}_{i}(k)\}.
	\end{align*}}\normalsize

Given $N$ discrete-time affine models, there are $I={N \choose 2}$ model pairs  and 
let the mode $\iota\in \{1,\cdots, I \}$ denote the pair of models $(i,j)$. Then, concatenating $ \bm{\vec{x}}_{i}^0$ ,$\bm{\vec{x}}_{i,T}$, ${x}_{i,T}$, $y_{i,T}$, $d_{i,T}$ , $z_{i,T}$, ${w}_{i,T}$ and $ {v}_{i,T}$ for each model pair, we define
\begin{align*}
\nonumber& \bm{\vec{x}}_0^\iota=\textstyle\vect_{i,j}\{ \bm{\vec{x}}_{i}^0\},\; \; \bm{\vec{x}}_T^\iota=\textstyle\vect_{i,j}\{ \bm{\vec{x}}_{i,T}\}, \;\;\bm{\vec{u}}_T^\iota=[{u}_T^{\mathsf{T}},{d}_T^{\iota \mathsf{T}}]^\mathsf{T},\\
\nonumber& {x}_T^\iota=\textstyle\vect_{i,j} \{{x}_{i,T}\},\; 
{y}_T^\iota=\textstyle\vect_{i,j} \{ {y}_{i,T}\}, \; \; {z}_T^\iota=\textstyle\vect_{i,j} \{ {z}_{i,T}\},\\
\nonumber & {d}_T^\iota=\textstyle\vect_{i,j} \{ {d}_{i,T}\}, \; \;  {w}_T^\iota=\textstyle\vect_{i,j} \{ {w}_{i,T}\}, \; {v}_T^\iota=\textstyle\vect_{i,j} \{ {v}_{i,T}\}.
\end{align*}

The states and outputs over the entire time horizon for each mode $\iota$ can be written as simple functions of the initial state $\bm{\vec{x}}_0^\iota$, input vectors ${u}_T$, ${d}_T^\iota$, and noise $ {w}_T^\iota$, $ {v}_T^\iota$:
\begin{align}
\label{eq:x}
x_T^\iota&= M_x^\iota \bm{\vec{x}}_0^\iota + \Gamma_{xu}^\iota u_T + \Gamma_{xd}^\iota d_T^\iota +\Gamma_{xw}^\iota w_T^\iota+ \tilde{f}_x^\iota, \\
\label{eq:y}
y_T^\iota &= M_y^\iota \bm{\vec{x}}_0^\iota + \Gamma_{yu}^\iota u_T + \Gamma_{yd}^\iota d_T^\iota +\Gamma_{yw}^\iota w_T^\iota+ \tilde{f}_y^\iota, \\
\label{eq:bmx}
\bm{\vec{x}}_T^\iota &= \bar{A}^\iota \bm{\vec{x}}_0^\iota + \Gamma_u^\iota u_T + \Gamma_d^\iota d_T^\iota +\Gamma_w^{\iota}w_T^\iota+ \tilde{f}^\iota, \\
\label{eq:z}
z_T^\iota & = \bar{C}^\iota \bm{\vec{x}}_T^\iota+\bar{D}_u^\iota {u}_T+\bar{D}_d^\iota {d}_T+\bar{D}_v^\iota {v}_T^\iota+ \tilde{g}^\iota.
\end{align}

The matrices and vectors $M_\star^\iota$, $\Gamma_{\star u}^\iota$, $\Gamma_{\star w}^\iota$, $\Omega_\star^\iota$ and $\tilde{f}_\star^\iota$ for $\star \in \{x,y\}$, and $\bar{A}^\iota$, $\Gamma_u^\iota$, $\Gamma_d^{q}$, $\Omega^\iota$, $\bar{C}^\iota$, $\bar{D}^\iota$, $\bar{D}_v^\iota$, $\tilde{f}^\iota$,$\tilde{g}^\iota$ are defined in the appendix. Moreover, the uncertain variables for each mode $\iota$ are concatenated as 
$\overline{x}^\iota=[\bm{\vec{x}}_0^{\iota\mathsf{T}} \ d_T^{^\iota\mathsf{T}} \ w_T^{^\iota\mathsf{T}} \ v_T^{^\iota\mathsf{T}}]^\mathsf{T}$.

We then concatenate the polyhedral state constraints in \eqref{eq:x_polytope} and \eqref{eq:y_polytope}, eliminating $x_T$ and $y_T$ in them and expressing them in terms of  $\bar{x}^\iota$ and ${u}_T$. First, let

{\vspace*{-0.4cm}\begin{align*}
	\bar{P}_x^\iota &= \textstyle\diag_{i,j}\diag_T\{ P_{x,i}\}, \quad \bar{P}_y^\iota = \diag_{i,j}\diag_T\{ P_{y,i}\},\\
	\bar{p}_x^\iota &= \textstyle\vect_{i,j}\vect_T\{ p_{x,i}\}, \ \quad \bar{p}_y^\iota = \vect_{i,j}\vect_T\{ p_{y,i}\}.
	\end{align*}}\normalsize

Then, 
we can rewrite the polyhedral constraints on $\star$ as:

{ \vspace*{-0.4cm}\begin{align*}
	\bar{P}_{\star}^\iota x_T^\iota \leq \bar{p}^\iota_{\star} &\Leftrightarrow
	H_{\star}^\iota \bar{x}^\iota \leq h^\iota_{\star}(u_T), \; \star \in \{x,y \}
	\end{align*}}\normalsize
where { $
	H_{\star} ^\iota= \bar{P}^\iota_{\star}\begin{bmatrix} M^\iota_{\star} & \Gamma^\iota_{\star d}& \Gamma^\iota_{\star w}&\bm{0} \end{bmatrix}$ and $ 
	h_{\star}^\iota(u_T) = \bar{p}^\iota_{\star}-\bar{P}^\iota_{\star}\Gamma^\iota_{\star u}u_T^\iota-\bar{P}^\iota_{\star}\tilde{f}^\iota_{\star}.
	$}\normalsize \ Similarly, let

{ \vspace*{-0.4cm}\begin{align*} 
	\bar{Q}_{u} &=\textstyle\diag_T\{ Q_{u}\}, \; \;
	\bar{Q}^\iota_{\dagger} = \textstyle\diag_{i,j}\textstyle \diag_T\{ Q_{{\dagger},i}\},\\
	\bar{q}_u &= \textstyle\vect_T\{ q_u\}, \;\;
	\bar{q}^\iota_{\dagger} = \textstyle\vect_{i,j}\textstyle\vect_T\{ q_{{\dagger},i}\}, \ \dagger \in \{d,w,v \}.
	\end{align*}}\normalsize

Then, the polyhedral input constraints in \eqref{eq:u_polytope} and \eqref{eq:w_polytope} 
for all $k$ are equivalent to $\bar{Q}_u u_T \leq \bar{q}_u$ and $\bar{Q}^{\iota}_{\dagger} {\dagger}^{\iota}_T \leq \bar{q}^{\iota}_{\dagger}$.

Moreover, we concatenate the initial state constraint in \eqref{eq:bmx0_polytope}:
{ \vspace*{-0.4cm}\begin{align*} 
\bar{P}_{0} ^\iota=\textstyle\diag_2\{ P_{0}\}, \quad \bar{p}_{0}^\iota=\textstyle\vect_2\{ p_{0}\}, \quad
\end{align*}}\normalsize
 Hence, in terms of $\bar{x}^{\iota}$, we have a polyhedral constraint of the form $H_{\bar{x}}^{\iota}\bar{x}^\iota \leq h_{\bar{x}}^{\iota}$, with\\
{ \vspace*{-0.4cm}\begin{align*}
 H_{\bar{x}}^{\iota} = \begin{bmatrix} \bar{P}_{0} ^\iota& 0 &0&0 \\ 0 & \bar{Q}_d^{\iota}&0&0\\ 0&0& \bar{Q}_w^{\iota}&0\\ 0 &0&0&\bar{Q}_v^{\iota}\end{bmatrix},
h_{\bar{x}}^{\iota} = \begin{bmatrix} \bar{p}_{0} ^\iota\\ \bar{q}_d^{\iota}\\ \bar{q}_w^{\iota}\\ \bar{q}_v^{\iota} \end{bmatrix}.
	\end{align*}}\normalsize

We also use the notations without superscript $\iota$, e.g., $H_{\bar{x}}$ and $h_{\bar{x}}$, to denote the variables, matrices and vectors that are concatenated across all models, with $\vect_{i,j}$ and $\diag_{i,j}$ being replaced by $\vect_{i=1}^N$ and $\diag_{i=1}^N$.

\begin{remark}
Since it is the responsibility of $d_i$ to satisfy the constraint in \eqref{eq:y_polytope}, it is important to make sure that the models are meaningful in the sense that for the range of time horizons of interest, $T$,  and for each $i \in \mathcal{Z}_N^+$,
\begin{align}
	\exists {d}_{i}(k) \in \mathcal{D}_i, \forall k \in \mathbb{Z}_{T-1}^0 : \eqref{eq:y_polytope} \ \text{\normalsize is satisfied}
\end{align}
\noindent for any given $\bm{\vec{x}}_0 \in \mathcal{X}_0$ that satisfies $y_i(0) \in \mathcal{X}_{y,i}$ (cf.  \eqref{eq:y_polytope}) for all $i \in \mathbb{Z}_N^+$, and for any given $u(k) \in \mathcal{U}$ for all $k \in \mathbb{Z}_{T-1}^0$. Similarly, the constraint in \eqref{eq:x_polytope} must be satisfiable by $u$ for any uncertainties.
If the considered affine model satisfies these assumptions, we refer to it as  \emph{well-posed}. Note that models that do not satisfy this assumption are impractical, since the responsibilities of the  inputs will be impossible to be satisfied; thus, we shall assume throughout the paper that the given affine models are always well-posed. 
\end{remark}

\begin{remark}
The case without `responsibilities' is a special case of the above modeling framework with 
$n_y=n_{d}=0$. 
\end{remark}

\section{Problem Formulation} \label{sec:form2}
\subsection{Active Model Discrimination Problem}
Designing a separating input for model discrimination is equivalent to finding an admissible input for the system, such that if the system is excited with this input, any observed trajectory is consistent with only one model, regardless of any realization of uncertain parameters. In addition, the designed separating input must be optimal for a given cost function $J(u_T)$. The problem of input design for model discrimination can be defined formally as follows:
\begin{problem}[Exact Active Model Discrimination] \label{prob:1}
Given $N$ well-posed affine models $\mathcal{G}_i$, and state, input and noise constraints, 
\eqref{eq:bmx0_polytope}, \eqref{eq:y_polytope}, \eqref{eq:d_polytope}-\eqref{eq:v_polytope},  
 find an optimal input sequence $u^*_T$  to minimize a given cost function $J(u_T)$    such that  for all possible initial states $\bm{x}_0$, uncontrolled inputs ${d}_T$, process noise  $w_T$ and measurement noise $v_T$, only one model is valid, i.e., the output trajectories of any pair of models have to differ {by a threshold $\epsilon$} in at least one time instance. The optimization problem can be formally stated as follows:
	\begin{subequations} \label{eq:quant}
		\begin{align}
		\nonumber \min_{u_T,x_T,{z_T}}  J(u_T) &  \\
		\text{s.t. }\hspace{0.2cm}
		\forall k \in \mathbb{Z}_{T-1}^0:&\ \text{\eqref{eq:u_polytope} holds},\\
		\hspace{-0.6cm} \begin{array}{l} \begin{rcases}
		\forall i,j \in \mathbb{Z}_\mathrm{N}^{+}, i<j, \forall k \in \mathbb{Z}_T^{0},\\
		\forall   \bm{x}_0,y_T ,d_T,w_T, v_T:\\ \text{\eqref{eq:state_eq}-\eqref{eq:bmx0_polytope},\eqref{eq:y_polytope},\eqref{eq:d_polytope}-\eqref{eq:v_polytope} hold} \end{rcases} \end{array} \hspace{-0.2cm}
		:& \begin{array}{l} \{ \forall k{^{\prime}} \in \mathbb{Z}_T^{+}: \text{\eqref{eq:x_polytope} holds}\}
		\wedge  \\
		\{\exists k{^{\prime}}\in \mathbb{Z}_T^{0}, \\
		{|z_{i}(k{^{\prime}}) - z_{j}(k{^{\prime}})| \geq \epsilon} \}. \end{array} \label{eq:separability_logic1}\hspace{-0.55cm}
		\end{align}
	\end{subequations}
\end{problem}


The first predicate in \eqref{eq:separability_logic1} guarantees that the `responsibility' of the controlled input is satisfied, while the second predicate is the separation condition, which ensures that for each pair of models and for all possible values of the uncertain variables, there must exist at least one time instance such that the output values of two models are different. The latter means that we are first dealing with the uncertainties, then considering the quantifier on $k$ (time instance) for each uncertainty. If we change the order of quantification as was done in \cite{harirchi2017active}, by first considering the existence quantifier and then dealing with all uncertainties, a conservative active model discrimination approach will be obtained.

\begin{problem}[Conservative Active Model Discrimination \cite{harirchi2017active}] \label{prob:2}
	Given $N$ well-posed affine models $\mathcal{G}_i$, and state, input and noise constraints 
	\eqref{eq:bmx0_polytope},\eqref{eq:y_polytope},\eqref{eq:d_polytope}-\eqref{eq:v_polytope},  find an optimal input sequence $u^*_T$  to minimize a given cost function $J(u_T)$    such that there exists at least one time instance at which the output trajectories of each pair of models are different {by a threshold $\epsilon$} for all possible initial states $\bm{x}_0$, uncontrolled inputs ${d}_T$, process noise  $w_T$ and measurement noise $v_T$. The optimization problem can be formally stated as follows:
		\begin{subequations} \label{eq:quant2}
		\begin{align}
		\nonumber \min_{u_T,x_T,{z_T}}  J(u_T) &  \\
		\text{s.t. }\hspace{0.2cm}
		\forall k \in \mathbb{Z}_{T-1}^0:&\ \text{\eqref{eq:u_polytope} holds},\\
		\hspace{-0.5cm}\ \begin{array}{l}{\forall i,j \in \mathbb{Z}_\mathrm{N}^{+}, i<j,}\\
		{\exists k^{\prime}}\in \mathbb{Z}_T^{0}, \\
			{|z_{i}(k{^{\prime}}) - z_{j}(k{^{\prime}})| \geq \epsilon}\\
		\end{array} : 
		&
		\begin{array}{l} \begin{cases}
	 \forall k \in \mathbb{Z}_T^{0},\\
		\forall   \bm{x}_0,y_T ,d_T,w_T, v_T:\\ \text{\eqref{eq:state_eq}-\eqref{eq:bmx0_polytope},\eqref{eq:x_polytope},\eqref{eq:y_polytope},\eqref{eq:d_polytope}-\eqref{eq:v_polytope} hold.} \end{cases} \end{array} \label{eq:separability_logic2} \hspace{-0.6cm}
		\end{align}
	\end{subequations}
\end{problem} \vspace*{-0.15cm}

Note that the quantifier order matters. In the first optimal formulation in Problem \ref{prob:1}, the `for all' quantifier precedes the `there exists' quantifier, implying that the time instance at which separation is enforced can be dependent on the realization of the uncertain variables (similar to adjustable robust optimization \cite{ben2004adjustable}). In the latter formulation in Problem \ref{prob:2} \cite{harirchi2017active}, the `there exists' quantifier precedes the `for all' quantifier, thus separation is enforced at the same time instance for all realizations of the uncertain variables. 

\section{Active Model Discrimination Approach} \label{sec:method}
In this section, we propose an optimization-based approach to solve Problem \ref{prob:1} and briefly review the solution that is proposed in \cite{harirchi2017active} to solve Problem \ref{prob:2}, against which we will compare our novel exact approach in the next section. Specifically, we show that Problem \ref{prob:1} can be posed as a bi-level optimization problem that can be further converted to a single level optimization problem using KKT conditions. As we illustrate in a comparison study, this problem can be computationally demanding compared to the conservative approach proposed in \cite{harirchi2017active}. However, this approach delivers the optimal separating input that can potentially have significantly better performance when compared to \cite{harirchi2017active}. 

\subsection{Exact Active Model Discrimination Approach} \label{sec:exact}
		
First, we show in the following lemma that 
Problem \ref{prob:1} can be reformulated as a bilevel optimization problem. 
\begin{lemma}[Bilevel Optimization Formulation] \label{prop:bilevel1}
	Given a separability index $\epsilon$, the active model discrimination problem in Problem \ref{prob:1} is equivalent 
	to a bilevel optimization problem with the following outer problem:	
	\begin{subequations}
			\begin{align}
			\tag{$P_{Outer}$}&\min_{u_T} J(u_T) \hspace*{1cm}\\
			\text{s.t. \; }  
			\forall i\in \mathbb{Z}_N^{+},\forall k \in \mathbb{Z}_{T-1}^0:&\ \text{\eqref{eq:u_polytope} holds},\\
			\hspace{-0.6cm} \begin{array}{l} \begin{rcases}
		\forall i,j \in \mathbb{Z}_\mathrm{N}^{+}, i<j, \forall k \in \mathbb{Z}_T^{0},\\
		\forall   \bm{x}_0,y_T ,d_T,w_T, v_T:\\ \text{\eqref{eq:state_eq}-\eqref{eq:bmx0_polytope},\eqref{eq:y_polytope},\eqref{eq:d_polytope}-\eqref{eq:v_polytope} hold} \end{rcases} \end{array} \hspace{-0.2cm}
		:& \begin{array}{l} \forall k \in \mathbb{Z}_T^{+}: 
		\text{\eqref{eq:x_polytope} holds}, \end{array} \label{eq:x_resp} \\ 
			\forall \iota \in \mathbb{Z}_\mathrm{I}^{+} :& \ \delta^{\iota*}(u_T) \geq \epsilon, \label{eq:eps}
			\end{align} \label{eq:20}
		\end{subequations}
		where $\delta^{\iota*}(u_T)$ is the solution to the inner problem: 
\begin{subequations}
				\begin{align}
				\tag{$P_{Inner}$}\delta^{{\iota}*}(u_T)=&\min_{\delta^{\iota},\bm{x}_0^\iota,d_T^{\iota},w_T^{\iota}, v_T^{\iota}}  \delta^{\iota} \\
				\text{s.t. \; }
				\forall i\in \mathbb{Z}_N^{+},\forall k \in \mathbb{Z}_{T-1}^0:&\ \text{\eqref{eq:state_eq} holds},\\
				\forall i\in \mathbb{Z}_N^{+},\forall k \in \mathbb{Z}_T^{+} : &\ \text{\eqref{eq:output_eq} holds}, \\
				\forall l \in \mathbb{Z}_p^{1}, k\in \mathbb{Z}_T^{0}:&\ |z_{i,l}(k)-z_{j,l}(k)| \leq \delta^{\iota}, \label{separability condition}\\
				\forall   \bm{x}_0^\iota,y_T^\iota,d_T^{\iota},w_T^{\iota}, v_T^{\iota}:&\ \text{\eqref{eq:bmx0_polytope},\eqref{eq:y_polytope},\eqref{eq:d_polytope}-\eqref{eq:v_polytope} hold.} 
				\end{align}
			\end{subequations}
		\end{lemma}
		\begin{proof}
		Since the universal quantifier distributes over conjunction \cite[pp. 45--46]{rosen2011discrete}, the constraint \eqref{eq:separability_logic1} of Problem \ref{prob:1} is separated into two independent constraints for all possible values of the uncertain variables, i.e., the `responsibility' of the controlled input and the separation condition, respectively. 
		Moreover, to convert Problem \ref{prob:1} into the above bilevel optimization problem, we consider the double negation of the (non-convex) separability condition in \eqref{eq:separability_logic1} for each pair of models indexed by $\iota$. We first negate \eqref{eq:separability_logic1}, which is equivalent to the existence of uncertain variables such that the least upper bound on the difference $\delta^\iota$ between each component $l$ of the observed outputs for all time instance $k$, i.e., $|z_{i,l}(k)-z_{j,l}(k)| \leq \delta^{\iota}$, is equal to 0. Finally, to recover the original problem, once again we negate the above negation by enforcing that the minimum or least upper bound $\delta^\iota$ must be at least $\epsilon$, where $\epsilon$ is the amount of desired separation or simply the machine precision. In other words, we have a maximin game formulation: 
			\begin{align*}
			\forall \iota \in \mathbb{Z}_\mathrm{I}^{+} : \max_{k,l}\min_{\bm{x}_0^\iota,d_T^{\iota},w_T^{\iota}, v_T^{\iota}}\delta^\iota, \; 
			\text{s.t. \,} |z_{i,l}(k)-z_{j,l}(k)| \leq \delta^\iota, 
			\end{align*}
			where $\delta^\iota \geq \epsilon$ guarantees the desired model separation or discrimination (represented by \eqref{eq:eps} and \eqref{separability condition}).
		\end{proof}
		%
		
For the exact active model discrimination approach, we assume that the following holds:
		\begin{assumption} \label{assump:1}
		In the concatenated constraint of the `responsibility' of the uncontrolled input, i.e., $H_y^{\iota} \bar{x}^{\iota} \le \bar{p}_y^{\iota} - \bar{P}_y^{\iota} \tilde{f}_y^{\iota} - \bar{P}_y^{\iota} \Gamma_{yu}^{\iota} u_T$, $\bar{P}_y^{\iota} \Gamma_{yu} = 0$ is satisfied.
		\end{assumption}

Note that Assumption \ref{assump:1} ensures that the resulting optimization problem does not have bilinear terms. If Assumption \ref{assump:1} does not hold, the problem $(P_{DID})$ results in a mixed-integer nonlinear program (MINLP). A particular solution to this problem is provided in \cite{Harirchi2017}, where a sequence of restriction approach reduces this MINLP into a sequence of computationally tractable optimization problems.

		Next, leveraging Lemma \ref{prop:bilevel1}, we further recast the bilevel formulation as an MILP with SOS-1 constraints.
		
		\begin{theorem}[Discriminating Input Design as an MILP] \label{thm:nonconservative}
			Given a separability index $\epsilon$, the active model discrimination problem (Problem \ref{prob:1}) under Assumption \ref{assump:1} is equivalent 
			to the following mixed-integer optimization problem:
			
			{\small \vspace*{-0.4cm}\begin{subequations}
					\begin{align} \label{eq:exact}
					\tag{$P_{DID}$}&\qquad \min_{u_T,\delta^\iota,\bar{x}^\iota,\mu_1^\iota,\mu_2^\iota,\mu_3^\iota, \Pi^{\iota}} J(u_T) \hspace*{1cm}\\
					&\nonumber \begin{array}{rl}\text{s.t. \; }  \hspace*{1.0cm}
					&\bar{Q}_uu_T\leq\bar{q}_u,  \label{eq:u_concat}\\
					\forall \iota \in \mathbb{Z}_I^{+} :&  \Pi^{\iota\mathsf{T}} \begin{bmatrix} h_{\bar{x}}^{\iota} \\\bar{p}_y^\iota-\bar{P}_y^\iota\tilde{f}_y^\iota \end{bmatrix} \le  \bar{p}_x^\iota\hspace{-0.05cm}-\hspace{-0.05cm}\bar{P}_x^\iota\widetilde{f}_x^\iota\hspace{-0.05cm}-\hspace{-0.05cm}\bar{P}_{x}^{\iota} \Gamma_{xu}^{\iota} u_T	, \\ 
			\forall \iota \in \mathbb{Z}_I^{+} :&  \Pi^{\iota\mathsf{T}} \textstyle\diag{\{H_{\bar{x}}^{\iota},H_y^{\iota} \}}= H_{x}^{\iota}, \; \Pi^{\iota} \ge 0, \\
					\forall \iota \in \mathbb{Z}_I^{+} :& \ \delta^{\iota}(u_T) \geq \epsilon, \\
					\forall \iota \in \mathbb{Z}_I^{+} :& \ \eqref{kkt:start}-\eqref{kkt:end} \text{ hold,}\\
					\forall \iota \in \mathbb{Z}_I^{+}, \forall i \in \mathbb{Z}_{\kappa}^{+}:&\ \text{SOS-1}: \{ \mu_{1,i}^{\iota}, \widetilde{H}_{\bar{x},i}^{\iota}\bar{x}^{\iota}-h_{\bar{x},i}^{\iota}\},\\
					\forall \iota \in \mathbb{Z}_I^{+}, \forall j \in \mathbb{Z}_{\xi}^{+}:&\ \text{SOS-1}: \{ \mu_{2,j}^{\iota}, \widetilde{R}_j^{\iota} \bar{x}^{\iota}-r_j^{\iota}+\tilde{S}_j^{\iota} u_T\},\\
					\hspace*{-.2cm}\forall \iota \in \mathbb{Z}_I^{+}, \forall j \in \mathbb{Z}_{\xi+\rho}^{\xi+1}:&\ \text{SOS-1}:\{ \mu_{3,j}^{\iota}, \widetilde{R}_j^{\iota} \bar{x}^{\iota}-\delta^{\iota}-r_j^{\iota}+\tilde{S}_j^{\iota} u_T\},\end{array}
					\end{align}
				\end{subequations}}\normalsize
				where $\Pi^{\iota}$, $\mu_{1,i}^{\iota}$, $\mu_{2,j}^{\iota}$ and $\mu_{3,j}^{\iota}$ are dual variables, while $\widetilde{H}_{\bar{x},i}^{\iota}$, $\widetilde{R}_j^{\iota}$, $\widetilde{S}_j^{\iota}$ and equations \eqref{kkt:start}-\eqref{kkt:end}  are defined in the proof.
			\end{theorem}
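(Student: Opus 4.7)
The plan is to start from the bilevel formulation established in Lemma~\ref{prop:bilevel1} and eliminate the inner minimization by replacing it with its Karush--Kuhn--Tucker (KKT) conditions. The inner problem is a linear program in the decision variables $(\bar{x}^\iota,\delta^\iota)$ for each fixed $u_T$, so strong duality holds and the KKT conditions are both necessary and sufficient for optimality. The well-posedness assumption ensures primal feasibility of the inner LP, so no further constraint qualification is needed.

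First I would put the inner problem in a clean parametric LP form. Using the closed-form expressions \eqref{eq:x}--\eqref{eq:z}, every constraint of the inner problem becomes linear in $(\bar{x}^\iota,\delta^\iota)$ with $u_T$ as a parameter. The absolute-value separability condition \eqref{separability condition} splits into two linear inequalities $\pm(z_{i,l}(k)-z_{j,l}(k)) \leq \delta^\iota$. The constraints then partition naturally into three blocks: (i) pure uncertainty-set constraints that depend only on $\bar{x}^\iota$, collected as $\widetilde{H}_{\bar{x}}^\iota \bar{x}^\iota \leq h_{\bar{x}}^\iota$ with $\kappa$ rows; (ii) state-polytope constraints on $x_{i,T}$ and $y_{i,T}$ that couple $\bar{x}^\iota$ and $u_T$ but do not involve $\delta^\iota$, written as $\widetilde{R}^\iota \bar{x}^\iota + \widetilde{S}^\iota u_T \leq r^\iota$ with $\xi$ rows; and (iii) the two-sided separability inequalities, which have the same structure but with an additional $-\delta^\iota$ term on the left-hand side, contributing $\rho$ further rows. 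This partitioning is exactly what produces the three dual-multiplier families $\mu_1^\iota,\mu_2^\iota,\mu_3^\iota$ in the statement.

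Next I would write the Lagrangian of the inner LP and record the KKT system: stationarity in $\bar{x}^\iota$ (a linear equation combining $\widetilde{H}_{\bar{x}}^{\iota\intercal}\mu_1^\iota$, $\widetilde{R}^{\iota\intercal}\mu_2^\iota$, $\widetilde{R}^{\iota\intercal}\mu_3^\iota$) and stationarity in $\delta^\iota$ (which, after differentiating the objective $\delta^\iota$ and the $-\delta^\iota$ coefficients in the third block, yields $\mathbbm{1}^\intercal\mu_3^\iota = 1$); primal feasibility; dual feasibility $\mu_1^\iota,\mu_2^\iota,\mu_3^\iota \geq 0$; and complementary slackness, which states that each multiplier times its corresponding slack equals zero. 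These equations are what is meant by \eqref{kkt:start}--\eqref{kkt:end}. The bilinear complementarity products are the only nonconvex pieces, but because each product involves two nonnegative scalars whose product must vanish, it is equivalent to the SOS-1 pair $\{\mu_i,\text{slack}_i\}$ appearing in the theorem. Substituting this KKT system into the outer problem from Lemma~\ref{prop:bilevel1} and retaining the scalar constraint $\delta^\iota \geq \epsilon$ yields exactly the formulation \eqref{eq:exact}.

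The main obstacle is not conceptual but organizational: verifying that the matrices $\widetilde{H}_{\bar{x}}^\iota,\widetilde{R}^\iota,\widetilde{S}^\iota$ obtained after eliminating the state trajectories via \eqref{eq:x}--\eqref{eq:z} genuinely reproduce every polytope constraint \eqref{eq:bmx0_polytope},\eqref{eq:x_polytope},\eqref{eq:y_polytope},\eqref{eq:d_polytope}--\eqref{eq:v_polytope} together with the two separability halves, and that the stationarity equation aggregates the three multiplier blocks with the correct coefficient signs (in particular the $-1$ coefficient on $\delta^\iota$ in the third block, which produces the normalization $\mathbbm{1}^\intercal\mu_3^\iota=1$). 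A brief secondary point to address is that the equivalence is only ``up to $\epsilon$'': with $\epsilon$ equal to machine precision the reformulation recovers the strict inequality $\delta^{\iota*}(u_T) > 0$ implicit in the exact separation requirement of Problem~\ref{prob:1}. Once the bookkeeping is done, the statement follows directly from the standard LP-KKT/SOS-1 reduction of a bilevel program with an LP lower level.
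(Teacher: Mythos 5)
Your proposal is correct and follows essentially the same route as the paper: concatenate the inner problem into a parametric LP with the same three constraint blocks (uncertainty-set rows with multiplier $\mu_1^\iota$, state-polytope rows coupling $\bar{x}^\iota$ and $u_T$ with multiplier $\mu_2^\iota$, and the two-sided separability rows carrying the $-\delta^\iota$ term with multiplier $\mu_3^\iota$), replace it by its KKT system including the stationarity condition $\mathbbm{1}^\intercal\mu_3^\iota=1$, and encode the bilinear complementary-slackness products as SOS-1 constraints. The only cosmetic difference is that the paper stacks your blocks (ii) and (iii) into a single matrix $R^\iota$ with a $[\mathbf{0};\mathbbm{1}]\delta^\iota$ right-hand side, which changes nothing in substance.
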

			
			\begin{proof}
			First, in light of Assumption \ref{assump:1}, the `responsibility' of the controlled input in \eqref{eq:x_resp} can be formulated as its robust conterpart by introducing a dual matrix variable $\Pi^{\iota}$.
			
				Then, we rewrite the separability condition \eqref{separability condition} in the concatenated form as:
				\begin{align}
				\Lambda^\iota\bar{x}^\iota\leq\delta^\iota-\bar{E}^\iota\tilde{f}^\iota-(\bar{E}^\iota\Gamma_{u}^\iota+F_u^\iota)u_T,
				\end{align}
				where $\Lambda^\iota$,$\bar{E}^\iota$, $\tilde{f}^\iota$, $\Gamma_u^\iota$ and $F_u^\iota$ are matrices related to the separability condition that will be defined in the appendix. Moreover, we can concatenate the inequalities associated with $\bar{x}^\iota$, according to whether they are explicitly dependent on $u_T$ or not: 
				
				{\small \vspace*{-0.4cm}\begin{align}
					&R^{\iota}\bar{x}^{\iota} \leq \begin{bmatrix} \bm{0} \\ \mathbbm{1} \end{bmatrix}\delta^{\iota} + r^{\iota} - S^{\iota}u_T, \hspace{-0.3cm}& \ \text{[Explicitly dependent on $u_T$]}\hspace{-0.1cm}\\
					&H_{\bar{x}}^\iota \bar{x}^\iota \leq h_{\bar{x}}^\iota, \hspace{-0.3cm}&\text{[Implicitly dependent on $u_T$]}\hspace{-0.1cm} \label{eq:responsibilityd}
					\end{align}}\normalsize
				where we define
\begin{align*}
					\hspace{0cm} R^\iota \hspace{-0.075cm}=\hspace{-0.075cm} \begin{bmatrix} H_y^\iota\\\Lambda^\iota  \end{bmatrix}\hspace{-0.075cm},  r^\iota \hspace{-0.075cm}=\hspace{-0.075cm} \begin{bmatrix} \overline{p}_y^\iota-\overline{P}_y^\iota\tilde{f}_y^\iota\\-\bar{E}^\iota\tilde{f}^\iota \end{bmatrix}\hspace{-0.075cm},  S^\iota \hspace{-0.075cm}=\hspace{-0.075cm} \begin{bmatrix} \overline{P}_y^\iota\Gamma_{yu}^\iota\\\bar{E}^\iota\Gamma_{u}^\iota+F_u^\iota \end{bmatrix}. 
					\end{align*}
				
				Thus, the inner problem \eqref{eq:inner} for each $\iota \in I$ in Lemma \ref{prop:bilevel1} can be written in the concatenated form:
\begin{subequations}
						\begin{align} \label{eq:inner}
						\tag{$P_{Inner}^c$}\delta^{\iota*}(u_T)=&\min_{\delta^{\iota},\bar{x}^{\iota}}  \delta^{\iota} \\
						\text{s.t. \; }\hspace{1.3cm}
						& R^{\iota}\bar{x}^{\iota} \leq \begin{bmatrix} \bm{0} \\ \mathbbm{1} \end{bmatrix}\delta^{\iota} + r^{\iota} - S^{\iota}u_T, \label{inner:01}\\
						&H_{\bar{x}}^\iota \bar{x}^\iota \leq h_{\bar{x}}^\iota.
						\end{align}
					\end{subequations}

					A single level optimization can then be obtained by replacing the inner programs with their KKT conditions.
					
					{\small \vspace*{-0.4cm}\begin{subequations} \label{kkt}
							\begin{align}
							&0=\textstyle\sum_{i=1}^{i=\kappa} \mu_{1,i} ^{\iota}H_{\bar{x}}^{\iota}(i,m)+\textstyle\sum_{j=1}^{j=\xi} \mu_{2,j}^{\iota}R^{\iota}(j,m)\label{kkt:start}\\
							&\nonumber \qquad +\textstyle\sum_{j=\xi+1}^{j=\xi+\rho}\mu_{3,j}^{\iota}R^{\iota}(j,m), \quad  \forall m=1, \cdots, \eta,\\
							&0=1-{\mu_3^{\iota}}^\text{T}\mathbbm{1},\\
							&\widetilde{H}_{\bar{x},i}^{\iota}\bar{x}^{\iota}-h_{\bar{x},i}^{\iota}\leq0, \quad \forall i=1,\dots \kappa,\\
							&\widetilde{R}_j^{\iota} \bar{x}^{\iota}-r_j^{\iota}+S_j^{\iota} u_T\leq0, \quad\forall j=1,\dots \xi,\\
							&\widetilde{R}_j^{\iota} \bar{x}^{\iota}-\delta^{\iota}-r_j^{\iota}+S_j^{\iota} u_T\leq0, \quad\forall j=\xi+1,\dots \xi+\rho,\\
							&\mu_{1,i}^{\iota}\geq0, \quad\forall i=1,\dots \kappa,  \\
							&\mu_{2,j}^{\iota}\geq0, \quad\forall j=1,\dots \xi,  \\
							&\mu_{3,j}^{\iota}\geq0, \quad\forall j=\xi+1,\dots \xi+\rho,\label{kkt:end}\\
							&\mu_{1,i}^{\iota}(\widetilde{H}_{\bar{x},i}^{\iota}\bar{x}^{\iota}-h_{\bar{x},i}^{\iota})=0, \quad\forall i=1,\dots \kappa, \label{kkt:cs1} \\
							&\mu_{2,j}^{\iota}(\widetilde{R}_j^{\iota} \bar{x}^{\iota}-r_j^{\iota}+S_j^{\iota} u_T)=0, \quad\forall j=1,\dots \xi,  \\
							&\mu_{3,j}^{\iota}(\widetilde{R}_j^{\iota} \bar{x}^{\iota}-\delta^{\iota}-r_j^{\iota}+S_j^{\iota} u_T)=0,  \, \forall j=\xi+1,\dots \xi+\rho,\label{kkt:cs3}
							\end{align}
						\end{subequations}}\normalsize
						where $\widetilde{H}_{\bar{x},i}^{\iota}$ is the $i$-th row of ${H}_{\bar{x}}^{\iota}$, $\widetilde{R}_j^{\iota}$ and $\widetilde{S}_j^{\iota}$ are the $j$-th row of ${R}^{\iota}$ and $S^\iota$, respectively, $\eta=2(n+T(m_d+m_w+m_v))$ is the number of columns of $H_{\barr{x}}^\iota$, $\kappa=2(c_0+T(c_d+c_w+c_v))$ is the number of rows of $H_{\barr{x}}^\iota$, $\xi=2Tc_y$ is the number of rows of $\bm{0}$ in (\ref{inner:01})  and $\rho=2 Tp$ is the number of rows of $\mathbbm{1}$ in (\ref{inner:01}). 
						
						The bilinear constraints corresponding to the complementary slackness conditions \eqref{kkt:cs1}-\eqref{kkt:cs3} can then be enforced using SOS-1 constraints, which can readily be solved by Gurobi and CPLEX \cite{gurobi,cplex}. Finally, replacing \eqref{eq:u_polytope} with its concatenated form \eqref{eq:u_concat}, the MILP formulation follows. 
					\end{proof}
					
					\subsection{Conservative Active Model Discrimination Approach} \label{sec:conservative}
					
					Next, we summarize the proposed optimization formulation in \cite{harirchi2017active} to solve the conservative Problem \ref{prob:2}, against which we will compare our exact approach in the previous section in terms of computational complexity and optimality. This approach relies on formulating the problem as a robust optimization problem and further recasting the problem as an MILP as follows, whose proofs can be found in \cite{harirchi2017active}. 
					\begin{lemma}[Robust Optimization Formulation \cite{harirchi2017active}] \label{prop:robust1}
						Given a separability index $\epsilon$, the active model discrimination problem in Problem \ref{prob:2} is equivalent 
						to the following:
\begin{subequations} \label{eq:robust2}
								\begin{align}
								\tag{$P_{Robust}$}  	\nonumber \min_{u_T,s,a} & \ J(u_T) \\
								\text{s.t. }
								\label{eq:u}
								& \bar{Q}_u u_T \leq \bar{q}_u, \, R\bar{x} \leq r(u_T,s),\\
								& a\in\{0,1\}^{pTN(N-1)}, \, \textstyle\sum_{k,l,\alpha} a_{i,j,k,l,\alpha} \geq 1, \\
								& \text{SOS-1: } \{s_{i,j,k,l,\alpha}, a_{i,j,k,l,\alpha}\}, \\
								\label{eq:forall_cond}
								& \forall\bar{x}: \Phi\bar{x}\leq \psi(u_T),
								\end{align}
							\end{subequations}
							where $a$ is the vector of binary variables $a_{i,j,k,l,\alpha}$ concatenated over the indices in the order $i,j,k,l,\alpha$, and $s$ is similarly a vector of slack variables $s_{i,j,k,l,\alpha}$, defined as $s=\begin{bmatrix} s_1 \\ s_2 \end{bmatrix}$ where $s_\alpha$ for $\alpha \in \{1,2\}$ as well as $\bar{Q}_u$, $\Phi$, $R$, $\bar{q}_u$, $\psi(u_T)$ and $r(u_T,s)$ are defined 
							in the appendix. 
						\end{lemma}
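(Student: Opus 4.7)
My plan is to recast Problem~\ref{prob:2} into the robust-optimization form \eqref{eq:robust2} by (i) replacing the disjunctive ``exists $k$'' separation clause with binary choice variables and associated slack variables linked by SOS-1 constraints, and (ii) collecting the remaining constraints into two bundles, one that must hold robustly in the uncertainty and one that characterizes the uncertainty polytope itself.

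First, I would unfold $z_i(k) \neq z_j(k)$ component-wise: it is equivalent to the existence of $l \in \mathbb{Z}_p^+$ and a sign $\alpha \in \{1,2\}$ such that $(-1)^{\alpha+1}(z_{i,l}(k) - z_{j,l}(k)) \geq \epsilon$, where $\epsilon$ is the separability index introduced in Lemma~\ref{prop:bilevel1}. Because in Problem~\ref{prob:2} the existential quantifier over $k$ precedes the universal one over the uncertainties, the separating triple $(k,l,\alpha)$ may be chosen \emph{independently} of $\bar{x}$, which is exactly what distinguishes the conservative formulation from the exact one. I would encode this choice by binary variables $a_{i,j,k,l,\alpha} \in \{0,1\}$ and impose $\sum_{k,l,\alpha} a_{i,j,k,l,\alpha} \geq 1$ for every ordered pair $(i,j)$ so that at least one of the disjuncts is selected.

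Second, to deactivate the unselected inequalities without resorting to a big-$M$ constant, I would introduce continuous slacks $s_{i,j,k,l,\alpha} \geq 0$ organized by sign into $s = [s_1^\mathsf{T}\;\; s_2^\mathsf{T}]^\mathsf{T}$ and link them to $a$ by SOS-1 constraints $\{s_{i,j,k,l,\alpha}, a_{i,j,k,l,\alpha}\}$, so that $a_{i,j,k,l,\alpha}=1$ forces $s_{i,j,k,l,\alpha}=0$ (the corresponding inequality is active) while $a_{i,j,k,l,\alpha}=0$ lets the slack absorb and neutralize the inequality. Substituting the closed-form output expression \eqref{eq:z}, each separating inequality becomes linear in $\bar{x}$, $u_T$ and $s$; stacking these together with the state constraints \eqref{eq:x_polytope}--\eqref{eq:y_polytope} (rewritten via \eqref{eq:x}--\eqref{eq:y}) yields a single block $R\bar{x} \leq r(u_T, s)$ that must hold robustly in $\bar{x}$.

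Finally, the uncertainty constraints \eqref{eq:bmx0_polytope} and \eqref{eq:d_polytope}--\eqref{eq:v_polytope} concatenate into a single polytope $\Phi\bar{x}\leq\psi(u_T)$, while the controlled-input constraints \eqref{eq:u_polytope} remain as $\bar{Q}_u u_T \leq \bar{q}_u$. The resulting program is exactly \eqref{eq:robust2}, read as: for every $\bar{x}$ satisfying $\Phi\bar{x}\leq\psi(u_T)$, the robust block $R\bar{x}\leq r(u_T,s)$ holds. The main obstacle I anticipate is the careful bookkeeping of the responsibility partition: the polyhedral constraints on $y_i$ (the responsibility of $d_i$) belong on the quantified side and are automatically discharged by the well-posedness assumption in Remark~1, whereas the polyhedral constraints on $x_i$ (the responsibility of $u$) must enter the $R$-block so that $u_T$ robustly satisfies them for every admissible realization of uncertainty. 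Once this sorting is done consistently across all models and time steps, the equivalence with \eqref{eq:robust2} is immediate from the disjunctive encoding; a subsequent reduction to a plain MILP would then require only LP duality (or Farkas' lemma) applied to the semi-infinite clause.
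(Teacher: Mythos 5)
Your proposal is correct and follows essentially the same route as the paper's construction: the paper defers the formal proof to \cite{harirchi2017active}, but its appendix definitions of $\Lambda$, $\lambda(u_T,s)$, $R$, $r(u_T,s)$, $\Phi$ and $\psi(u_T)$ confirm exactly your decomposition --- the disjunctive separation encoded via $a_{i,j,k,l,\alpha}$ with $\sum a \geq 1$ and SOS-1-linked slacks, the $x$-responsibility constraints stacked with the separation inequalities into the robust block $R\bar{x}\leq r(u_T,s)$, and the $y$-responsibility constraints placed alongside $H_{\bar{x}}\bar{x}\leq h_{\bar{x}}$ in the uncertainty set $\Phi\bar{x}\leq\psi(u_T)$. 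The only slight imprecision is that the $y$-constraints are not ``discharged'' by well-posedness but rather retained as part of the uncertainty-set description (whose dependence on $u_T$ through $h_y(u_T)$ is precisely what later forces the $\bar{P}_y\Gamma_{yu}=0$ dichotomy in Theorem \ref{thm:final_formulation}).
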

						\begin{theorem}[Conservative Discriminating Input Design as an MILP \cite{harirchi2017active}]\label{thm:final_formulation}
							Given well-posed affine models and the separability index $\epsilon$, consider the following problem:
\begin{subequations} %
									\begin{align}
									&\qquad \min_{u_T,s,a,\Pi}  J(u_T) \label{eq:pcdid} \tag{$P_{CDID}$}\\
									&\nonumber \begin{array}{rl}\text{s.t. }
									& \bar{Q}_u u_T \leq \bar{q}_u, \Phi^T\Pi = R^T,  \Pi \geq 0,\\
									& \Pi^T\phi \leq r(u_T,s),  
									\\& a\in\{0,1\}^{pTN(N-1)}, \\
									&  \textstyle\sum_{k  \in \mathbb{Z}_T^{0}} \textstyle\sum_{l  \in \mathbb{Z}_p^{1}} \textstyle\sum_{\alpha\in\{1,2\}} a_{i,j,k,l,\alpha} \geq 1,\\
									& \text{SOS-1: } \{s_{i,j,k,l,\alpha}, a_{i,j,k,l,\alpha}\},\end{array}
									\end{align}
								\end{subequations}
								$\forall i,j  \in \mathbb{Z}_N^{+}: i < j$, $
								\forall l \in \mathbb{Z}_p^{1}$, $\forall k\in \mathbb{Z}_T^{0}$ and $\alpha\in\{1,2\}$, where $\Pi$ is a matrix of dual variables, while $\bar{Q}_u$, $\Phi$, $R$, $\bar{q}_u$, $\psi(u_T)$ 
								and $r(u_T,s)$ are problem-dependent matrices and vectors that are defined in the appendix, and $\phi$ is defined as the component-wise maximum of $\psi(u_T)$ where $u_T$ is subject to $u(k) \in \mathcal{U}$ for all $k \in \mathbb{Z}_{T-1}^0$. Then,
								\begin{enumerate}
									\item if {$\psi(u_T)$ is independent of $u_T$, i.e., $\psi(u_T)=\phi$,} 
									Problem \eqref{eq:pcdid} is equivalent up to the separability index $\epsilon$ to Problem \ref{prob:2} and its solution is optimal;
									\item if {$\psi(u_T)$ is dependent on $u_T$,}
									and if Problem \eqref{eq:pcdid} is feasible, {the solution of Problem \eqref{eq:pcdid} is sub-optimal with respect to Problem \ref{prob:2}.}
								\end{enumerate}
							\end{theorem}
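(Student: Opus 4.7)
The plan is to derive $P_{CDID}$ directly from the robust formulation $P_{Robust}$ of Lemma~\ref{prop:robust1} by dualizing the only semi-infinite piece, namely the requirement that $R\bar{x}\leq r(u_T,s)$ must hold for every $\bar{x}$ satisfying $\Phi\bar{x}\leq \psi(u_T)$. All other constraints in $P_{Robust}$—the input polytope, binary counting, and SOS-1 constraints—already involve only the finite-dimensional decision variables $u_T,s,a$ and can be transcribed verbatim.

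First, I would apply LP duality rowwise to the robust inequality. For each row index $k$, the constraint $\max\{R_k\bar{x}:\Phi\bar{x}\leq\psi(u_T)\}\leq r_k(u_T,s)$ is equivalent, under the standard LP primal--dual assumptions, to the existence of a nonnegative dual vector $\pi_k$ with $\Phi^\intercal \pi_k=R_k^\intercal$ and $\pi_k^\intercal \psi(u_T)\leq r_k(u_T,s)$. Stacking the $\pi_k$ columnwise into a matrix $\Pi$ yields exactly the block of constraints in $P_{CDID}$: $\Phi^\intercal \Pi=R^\intercal$, $\Pi\geq 0$, and $\Pi^\intercal \psi(u_T)\leq r(u_T,s)$. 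The only remaining obstacle to an MILP is the potential bilinearity hidden in $\Pi^\intercal \psi(u_T)$, which forces me to examine how $u_T$ enters $\psi(u_T)$.

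Inspecting the pieces that make up $\Phi\bar{x}\leq \psi(u_T)$—the uncertainty polytopes on $\bar{x}_0^\iota,d_T^\iota,w_T^\iota,v_T^\iota$, together with the responsibility constraint \eqref{eq:y_polytope} rewritten via \eqref{eq:y}—shows that the only $u_T$-dependent block on the right-hand side is $-\bar{P}_y\Gamma_{yu}u_T$. Thus when $\bar{P}_y\Gamma_{yu}=0$, $\psi(u_T)$ collapses to a constant vector, which I would identify with the $\phi$ of the theorem statement. The dualized constraint $\Pi^\intercal \phi\leq r(u_T,s)$ is then linear in $(u_T,s,\Pi)$, and $P_{CDID}$ is exactly equivalent to $P_{Robust}$ and hence (via Lemma~\ref{prop:robust1}) to Problem~\ref{prob:2} up to the index $\epsilon$. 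This proves part~1.

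For part~2, when $\bar{P}_y\Gamma_{yu}\neq 0$, $\psi(u_T)$ depends genuinely on $u_T$ and a faithful dualization would leave bilinear terms that escape the MILP framework. The remedy is to replace $\psi(u_T)$ by a constant over-approximation $\phi$ (componentwise upper bound over admissible $u_T\in\mathcal{U}^T$), which shrinks the implicit uncertainty set in the robust formulation to an outer polytope. Any $(u_T,s,a,\Pi)$ that is feasible for $P_{CDID}$ therefore certifies feasibility of the original robust constraint in $P_{Robust}$, so the resulting $u_T$ remains separating, but possibly at a larger cost—hence sub-optimal. The main obstacle I anticipate is verifying that LP duality applies without a gap: one needs feasibility and boundedness of the inner primal LP in $\bar{x}$ for every admissible $u_T$, which follow respectively from well-posedness of the affine models and from the polyhedral (hence bounded by construction) uncertainty sets; once these are in place, the reformulation is bookkeeping.
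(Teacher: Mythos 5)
Your reconstruction is correct and matches the intended argument: the paper itself defers this proof to the cited reference, but the structure of \eqref{eq:pcdid} (the constraints $\Phi^\intercal\Pi = R^\intercal$, $\Pi\geq 0$, $\Pi^\intercal\phi\leq r(u_T,s)$) is exactly the row-wise LP-dual certificate for the semi-infinite constraint in \eqref{eq:robust2} that you construct, with the dichotomy on $\bar{P}_y\Gamma_{yu}$ arising precisely because $h_y(u_T)$ is the only $u_T$-dependent block of $\psi(u_T)$. Two small slips that do not affect the logic: replacing $\psi(u_T)$ by a componentwise upper bound $\phi$ \emph{enlarges} the uncertainty polytope to an outer approximation (you wrote ``shrinks''), and polyhedral sets are not automatically bounded --- boundedness of the uncertainty sets is an implicit standing assumption needed for the inner LP to be well behaved.
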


\section{Complexity Analysis and Pair Elimination}\label{sec:complex}
To analyze the computational complexity of the exact and conservative active model discrimination formulations in Sections \ref{sec:exact} and \ref{sec:conservative}, we tabulate the number of SOS-1 constraints as well as binary and continuous  optimization variables for \eqref{eq:exact} and \eqref{eq:pcdid} (in Theorems \ref{thm:nonconservative} and \ref{thm:final_formulation}), respectively, as follows, with $c_1=n+T(m_d+m_w+m_v)$, $c_2=c_0+T(c_d+c_w+c_v)$ and $c_3=c_2+Tc_y$. 

\begin{table}[ht]
\caption{Complexity of Exact and Conservative Formulations \label{table:complex} } \vspace{-0.3cm}
\centering 
\begin{tabular}{ c  c  c  c } 
	\hline \multirow{ 2}{*}{Formulation} & \# of  &  \#  of  & \#  of  \\
	 & SOS-1 constr. & binary var. & continuous var. \\ 
	\hline \multirow{ 3}{*}{Exact}  & \multirow{ 3}{*}{$2I (c_3+Tp)$}  & \multirow{ 3}{*}{$0$} & $2I (c_3+Tp)+Tn_u$\\
	& & & $+2 I c_1 +I$\\
	& & & $+4 I T c_x c_3$\\ \hline
	 \multirow{ 3}{*}{Conservative} & \multirow{ 3}{*}{$2ITp$} & \multirow{ 3}{*}{$2ITp$} & $2ITp + T n_u$ \\
	& & & $+NT(c_2+T c_y)$ \\
	& & & $(Nc_x+2Ip)$\\
	\hline 
\end{tabular}
\label{table:complex1} 
\end{table}

Note that the SOS-1 constraints are integral constraints, which can be viewed as each contributing to an additional binary variable. Since in most active model discrimination problems, $c_3 \gg Tp$, from Table \ref{table:complex}, we observe that the exact formulation would involve many more {SOS-1 constraints,}
which are known to greatly increase the computational effort for solving an MILP. Thus, the exact formulation in Theorem \ref{thm:nonconservative} is typically more computationally complex than the conservative formulation in Theorem \ref{thm:final_formulation}, implying that  there is a trade-off between computational complexity and optimality. The exact formulation is optimal but complex, while the conservative formulation is suboptimal but computationally more tractable.

Furthermore, the computational complexity of both formulations can be seen to scale linearly with the number of model pairs $I$, i.e., factorially with the number of models $N$. Thus, to make the exact formulation in Theorem \ref{thm:nonconservative} more tractable, we propose to reduce the complexity of the formulation by decreasing $I$ via pair elimination\footnote{It is also possible to convert SOS-1 constraints into corresponding big-M formulations to potentially reduce their complexity but we found that this leads to unsurmountable numerical issues.}, described next.

\subsection{Pair Elimination}
Inspired by \cite{Scott2014Input}, we develop a preprocessing step to attempt to eliminate the model pairs that are trivially separated or discriminated with any controlled input $u$, i.e., all model pairs $\iota$ for which $\delta^{\iota*}(u_T)\geq\epsilon$ holds for all feasible $u_T$. This pair elimination approach can significantly reduce the cardinality of the set of model pairs $I$ that we need to consider when solving \eqref{eq:exact} in Theorem \ref{thm:nonconservative}, thus making the exact formulation much more tractable. 

More formally, if for the pair ($i$,$j$), indexed by $\iota$, the following condition holds, then we can eliminate the constraints corresponding to the pair $\iota$ in Theorem 1:
\begin{align}\label{eq:pair}
	\delta^{\iota*}(u_T)\geq\epsilon, \forall u_T: \overline{Q}_uu_T\leq\overline{q}_u.
\end{align}

In addition, it is straightforward to see that this robust optimization problem is equivalent to the feasibility problem of its negation:
\begin{subequations}\label{eq:pair2}
		\begin{align} \label{eq:Pelim}
		\tag{$P_{Elim}$} & \qquad \text{Find} \ u_T \hspace*{1cm}\\
		&\nonumber\begin{array}{rl} \text{s.t.}  \hspace*{0.1cm}
		&\bar{Q}_uu_T\leq\bar{q}_u, \\
	     & \delta^{\iota*}(u_T) < \epsilon. \end{array}
		\end{align}
\end{subequations}

Thus, if Problem $P_{Elim}$ is infeasible, then we can eliminate the constraints corresponding to pair $\iota$ in Theorem \ref{thm:nonconservative}.
\subsection{Numerical Example}
To illustrate and compare the difference between the complexity and optimality of  the exact and conservative formulations in Theorems \ref{thm:nonconservative} and \ref{thm:final_formulation}, we consider the following second-order linear system models:

\begin{align*}
&A_1=\begin{bmatrix} 0.6&0.2\\-0.4 &-0.2 \end{bmatrix},\ 	
B_1=\begin{bmatrix} 1&0\\0 &1\end{bmatrix},\ B_{w,1}=\begin{bmatrix} 1\\1 \end{bmatrix},\\
&C_1=\begin{bmatrix} 1&0\\0 &1 \end{bmatrix},\ 	
D_1=\begin{bmatrix} 0&0\\0 &0\end{bmatrix},\ D_{v,1}=\begin{bmatrix} 1\\1 \end{bmatrix},\\
&f_1=\begin{bmatrix} 0\\0 \end{bmatrix},g_1=\begin{bmatrix} 0\\0 \end{bmatrix} 
\end{align*}\normalsize
with four other models that are similarly defined as above with the following modifications:
\begin{align*}
&A_2=\begin{bmatrix} 1&0.2\\-0.4 &-0.2 \end{bmatrix},\ A_3=\begin{bmatrix} 0.6&-0.5\\-0.4 &-0.2 \end{bmatrix},\\
&\  B_4=\begin{bmatrix} 0&0\\0 &1\end{bmatrix}, \ C_5=\begin{bmatrix} 1&0\\0 &0 \end{bmatrix}.	
\end{align*}

The controlled input is bounded by {{$ -2\leq u(k)\leq2$}} and the uncertain variables for all $i=1,\cdots,5$ are also bounded by $0\leq x_i(0)\leq1$, $1\leq y_i(0)\leq2$, $-0.1\leq d_i(k)\leq0.1$, $-0.01\leq w_i(k)\leq0.01$ and $-0.01\leq v_i(k)\leq0.01$. \\

\begin{table}[h]
	\centering
	\caption{Results of numerical example} \vspace{-0.3cm}
	\label{table:numerical}
	\begin{tabular}{c c c c c }
		\hline\multicolumn{2}{c}{} & 1-norm & $\infty$-norm & 2-norm  \\ \hline
		\multirow{2}{*}{\eqref{eq:exact}}                                                                  & Optimal Value   & 0.074  & 0.074    & 0.00548 \\ 
		& Time (s) & 75.28  &   181.87       & 771.19  \\ \hline
		\multirow{2}{*}{\begin{tabular}[c]{@{}c@{}}\eqref{eq:exact} +\\  \eqref{eq:Pelim}\end{tabular}} & Optimal Value   & 0.074  & 0.074    & 0.00548 \\ 
		& Time (s) & 36.51  & 76.70    & 88.97   \\ \hline
		\multirow{2}{*}{\eqref{eq:pcdid}}                                                                  & Optimal Value   &      1.359  &       0.975   &     1.047    \\ 
		& Time (s) &    1.55&  1.57        &    1.22     \\ \hline
	\end{tabular}
\end{table}

In this example, we choose the 1-norm, $\infty$-norm and 2-norm as objective functions with $T=2$ and $\epsilon=0.01$. The optimal objective values and computation times\footnote{All the examples are implemented on a 3.1 GHz machine with 8 GB of memory running MacOS. 
	For the implementation of proposed approach, we utilized Yalmip \cite{YALMIP} and Gurobi \cite{gurobi} in the MATLAB environment.} for the exact formulation (cf. Theorem \ref{thm:nonconservative}) with and without pair elimination (i.e., \eqref{eq:exact} or \eqref{eq:exact}+\eqref{eq:Pelim}) as well as the conservative formulation \eqref{eq:pcdid} (cf. Theorem \ref{thm:nonconservative}) are shown in Table \ref{table:numerical}. Note that the depicted computation time for \eqref{eq:exact}+\eqref{eq:Pelim} is the combined sum of the all optimization routines, where we found that { 
	Model $5$ can be} eliminated based on the infeasibility of \eqref{eq:Pelim}.
	
	From Table \ref{table:numerical}, we observe that with the inclusion of the pair elimination procedure to the exact formulation \eqref{eq:exact}, the same optimal objective value can be obtained with significantly shorter computation time.
In addition,  {{although \eqref{eq:exact} obtained smaller objective values than \eqref{eq:pcdid}, its computation time was much longer.}} Furthermore, when the uncertainty sets become larger or the system dimension becomes higher, we can imagine that \eqref{eq:exact} will become even harder to solve. However, this is not a huge problem since the active model discrimination problem is solved offline. Nonetheless, there are still merits to the conservative solution using \eqref{eq:pcdid}, as a suboptimal solution that guarantees separations can be quickly found. In scenarios where computational resources are limited, then a trade-off between optimality and computational tractability will be necessary. {Moreover, if we change the input constraint to $-0.5 \leq u(k) \leq 0.5$, the feasible set of  \eqref{eq:pcdid} will become empty but \eqref{eq:exact} can still find the same optimal solution.}
\section{Application Case Studies: Active Intention Identification in (Semi-)Autonomous Vehicles} \label{sec:example}
In this section, we consider several case studies on active intention identification for (semi-)autonomous vehicles, where the underlying goal is to infer the intention of other road participants in hope of improving driving safety and performance. Specifically, we propose an active intention identification approach that consists of two components: (i) active model discrimination 
and (ii) model invalidation. 

The active model discrimination formulations (both exact and conservative) presented in the previous sections are solved \emph{offline} to design  optimal intention-revealing control inputs for the controlled/ego vehicle that can enhance the identification of the intention of other drivers from noisy sensor observations (by ``forcing" the intention models to behave differently), while optimizing for safety, comfort or fuel consumption, 
subject to engine power and braking limitations, as long as traffic laws are obeyed. Then, the obtained optimal input sequences will be implemented alongside a model invalidation algorithm (see \cite{Harirchi2015Model} for details) in \emph{real-time} to identify the intention of the other drivers  based on observed noisy output trajectories.


Our approach is general enough to capture various driving scenarios such as  intersection crossings and lane changes, provided that suitable models of intentions are available (from first principles or via data-driven approaches). For the sake of brevity and without loss of generality, we will only consider the scenarios of intersection crossing and lane changing, where the driver of the other vehicles, autonomous or human-driven, can choose among three intentions: inattentive, malicious and cautious (also used in \cite{Yong2014}), each of which represent nondeterministic sets of possible driving behaviors. The next two sections present the intention models for the intersection crossing and lane changing scenarios, respectively, followed by a section that discusses the obtained solutions for active model discrimination using both the exact and conservative formulations. 

\subsection{Intention Models for an Intersection Crossing Scenario}
\label{subsec:intersection}
We consider two\footnote{This is for ease of exposition. In fact, \emph{arbitrary} number of vehicles can be handled with the same input as long as their initial conditions are within a predefined set, $\mathcal{X}_0$ (e.g., within a certain distance before the intersection).} vehicles at an intersection (origin of coordinate system). The discrete-time equations of motion for these two vehicles are given by: 
\begin{align*}
x(k+1)&=x(k)+v_x(k) \delta t,\\
v_x(k+1)&=v_x(k)+u(k)\delta t+w_x (k) \delta t,\\
y(k+1)&=y(k)+v_y(k) \delta t,\\
v_y(k+1)&=v_y(k)+d_i(k) \delta t +w_y (k)\delta t,
\end{align*}
 where, respectively, $x$ and $y$ are ego car and other car's positions in $m$, $v_x$ and $v_y$ are ego car and other car's velocities in $\frac{m}{s}$, and $u$ and $d_i$ are ego car and other car's acceleration inputs in $\frac{m}{s^2}$, while $w_x$ and $w_y$ are process noise signals in $\frac{m}{s^2}$ and $\delta t$ is the sampling time in $s$. In this example,  $\delta t=0.3s$ and the acceleration input is given by $u(k) \in \mathcal{U} \equiv[-7.85,3.97]\frac{m}{s^2}$, where the maximum acceleration $u^{\text{max}}$ is calculated based on an acceleration of 0$-$100$\frac{km}{h}$ in 7s while the minimum acceleration $u^{min}$ corresponds to a maximum braking force of $0.8g$. 

 {Similar to \cite{Mamessier2014AIAA} and \cite{mcruer1974mathematical}, we model the human input (which reflects the human intention)  as a PD controller. The rest of model consists of simple vehicle dynamics.} We consider three driver intentions, $i \in \{ \textrm{I , C, M} \}$, corresponding to \underline{I}nattentive, \underline{C}autious and \underline{M}alicious drivers. 
 Using $\bm{\vec{x}}(k)=\begin{bmatrix} x(k), v_{x}(k), y(k), v_{y}(k) \end{bmatrix}^\intercal$ as the state vector, $\bm{\vec{u}}(k)=\begin{bmatrix} u(k), d_i(k) \end{bmatrix}^\intercal$ as the input vector and ${z}(k)=v_{y}(k) + v(k)$ as the noisy observation\footnote{Note that we assume the extreme scenario where only the other car's velocity is observed. Other sensor information, e.g., positions, can also be included and would only result in more optimal objective values.}, the vehicle dynamics for these driver intentions are modeled as the following:\\
\textbf{Inattentive Driver} ($i=\textrm{I}$), who is distracted and fails to notice the ego vehicle, thus attempting to maintain the velocity and proceeding with an uncontrolled input disturbance $d_{{I}}(k)\in\mathcal{D}_{{I}} \equiv 10\% \cdot \mathcal{U}$ (uncorrelated with $x(k)$ and $v_{x}(k)$):

{\small \vspace*{-0.4cm}\begin{align*} 
A_{I}&= \begin{bmatrix} 1 & \delta t & 0 & 0 \\ 0 & 1 & 0 & 0 \\ 0 & 0 & 1 & \delta t \\ 0 & 0 & 0 & 1 \end{bmatrix}\hspace{-0.05cm},\, B_{I}=\begin{bmatrix} 0 & 0 \\ \delta t & 0 \\ 0 & 0 \\ 0 & \delta t \end{bmatrix}\hspace{-0.05cm},\ B_{w,I}= B_{I},\\
C_{I}&=\begin{bmatrix} 0 &  0 & 0 & 1 \end{bmatrix},\,D_{I}=0, \,D_{v,I}=1.
\end{align*}}\normalsize
The inattentive vehicle also maintains forward mobility by ensuring that the velocity satisfies: $v_{y,I}(k) \in[6,9]\frac{m}{s}$.\\ 
\textbf{Cautious Driver} ($i=\textrm{C}$), who intends to stop at intersection with an input equal to $-K_{p,{{C}}} y(k)-K_{d,{{C}}} v_{y}(k)+d_{{C}}(k)$, where $K_{p,C}=1.5$ and $K_{d,C}=4.75$ are PD controller parameters. They are tuned to capture the characteristics of the cautious driver. We also allow an input uncertainty $d_{{C}}(k) \in \mathcal{D}_{{C}} \equiv 5\%\cdot \mathcal{U}$ to account for nonlinear, nondeterministic driving behaviors and heterogeneity between drivers: 

{\small \vspace*{-0.4cm}\begin{align*}
&A_{C}= \begin{bmatrix} 1 & \delta t & 0 & 0 \\ 0 & 1 & 0 & 0 \\ 0 & 0 & 1 & \delta t \\  0 & 0 & -K_p \delta t & 1-K_d \delta t \end{bmatrix}\hspace{-0.05cm},\, B_{C}=B_I, \\&B_{w,C}= B_{w,I},\
C_{C}=C_{I}, \,D_{C}=D_{I} \,D_{v,C}=D_{v,I}.
\end{align*}}\normalsize
\textbf{Malicious Driver} ($i=\textrm{M}$), who drives aggressively and attempts to cause a collision using an input equal to $K_{p,M }(x(k)-y(k))+K_{d,M } (v_{x}(k)-v_{y}(k))+d_M(k)$, where  $K_{p,C}=1$, $K_{d,C}=3.5$ are similarly PD controller parameters that are tuned to represent the characteristics of the aggressive driver and $d_M(k) \in \mathcal{D}_M \equiv 5\% \cdot \mathcal{U}$ is an input uncertainty to capture unmodeled variations among drivers:

{\small \vspace*{-0.4cm}\begin{align*}
&A_{M} \hspace{-0.05cm}=\hspace{-0.05cm} \begin{bmatrix} 1 & \delta t & 0 & 0 \\ 0 & 1 & 0 & 0 \\ 0 & 0 & 1 & \delta t \\  K_p \delta t & K_d \delta t & -K_p \delta t & 1-K_d \delta t \end{bmatrix}\hspace{-0.05cm}, B_{M}=B_I,\\&B_{w,M}=B_{w,I}, \
C_{M}=C_{I}, \,D_{M}=D_{I}, \,D_{v,M}=D_{v,I}.
\end{align*} }\normalsize

In addition, we choose the following initial conditions: 
\begin{align} \label{eq:initial states}
\begin{array}{c}
x(0) \in [15,18]m,\,  \quad v_{x}(0) \in [6,9]\frac{m}{s},\,\\   
y(0) \in  [15,18]m,\, \quad v_{y}(0) \in [6,9]\frac{m}{s},\,
\end{array}
\end{align}
where the initial velocities of the cars are constrained to match typical speed limits and the initial positions are based on reasonable distances from the intersection that still allow for a complete stop before the intersection, if needed. 
Moreover, the velocity of the ego car is constrained to be between $[0,9]\frac{m}{s}$ at all times 
to prevent it from moving backwards and from exceeding the speed limit. We also assume that the process and measurement noise signals are bounded with maximum magnitudes of 0.01 $\frac{m}{s^2}$ and 0.01 $\frac{m}{s}$, respectively, and the separability threshold $\epsilon$ is set to $0.25\frac{m}{s}$ to enforce a clear separation among intention models.

\subsection{Intention Models for a Lane Changing Scenario}
\label{subsec:lane-change}
Next, we describe the modeling assumptions as well as the intention models for a lane changing scenario on the highway. To simplify the problem, we assume that the other vehicle always drives in the center of its lane and hence has no motion in the lateral direction. We also assume that the lane width is $3.2m$. Under these assumptions, the discrete-time equations of motion for the ego and other vehicles are: 
		\begin{align*}
		\begin{array}{rl}
		x_e(k+1)&=x_e(k)+v_{x,e}(k) \delta t,\\
		v_{x,e}(k+1)&=v_{x,e}(k)+u(k) \delta t + w_{x,e} (k)\delta t,\\
		y_e(k+1)&=y_e(k)+v_{y,e}(k) \delta t + w_{y,e} (k)\delta t,\\
		x_{o}(k+1)&=x_o(k)+v_{x,o}(k) \delta t,\\
		v_{x,o}(k+1)&=v_{x,o}(k)+d_i(k) \delta t + w_{x,o} (k)\delta t,
		\end{array}
		\end{align*}
	where, respectively, $x_e$ and $y_e$, and $v_{x,e}$ and $v_{y,e}$ are the ego car's longitudinal and lateral positions in $m$, and the ego car's longitudinal and lateral velocities in $\frac{m}{s}$, $x_o$ and $v_{x,o}$ are the other car's longitudinal position in $m$ and longitudinal velocity in $\frac{m}{s}$, $u$ and $d_i$ are ego car and other car's acceleration inputs in $\frac{m}{s^2}$, $w_{x,e}$, $w_{x,e}$, $w_{x,e}$ are process noise signals in $\frac{m}{s^2}$ and $\delta t$ is the sampling time in $s$. For this example, we assume the following controlled inputs $u(k) \in \mathcal{U} \equiv[-7.85,3.97]\frac{m}{s^2}$ and $v_{y,e}(k) \in [-0.35,0]\frac{m}{s}$ (where $y$ is in the direction away from the other lane) and that the sampling time is  $\delta t=0.3s$.
As before, we consider three driver intentions $i \in \{ \textrm{I , C, M} \}$ that are modeled as: \\
	\textbf{Inattentive Driver} ($i=\textrm{I}$), who fails to notice the ego vehicle and tries to maintain his driving speed, thus proceeding with an acceleration input which lies in a small range $d_{{I}}(k)\in\mathcal{D}_{{I}} \equiv 10\%\cdot\mathcal{U}$ :

{\small \vspace*{-0.4cm}
\begin{align*} 
		&A_{I}= \begin{bmatrix} 1 & \delta t & 0 & 0 & 0\\ 0 & 1 & 0 & 0&0 \\ 0 & 0 & 1 &  0 & 0 \\ 0 & 0 & 0 & 1 & \delta t \\ 0 & 0 & 0 & 0 & 1 \end{bmatrix}\hspace{-0.05cm},\, B_{I}=\begin{bmatrix} 0 & 0 &0\\ \delta t & 0 &0\\ 0 & \delta t &0 \\0&0&0\\ 0 &0& \delta t \end{bmatrix}\hspace{-0.05cm},\,f_I=\bm{0}_{5\times1},\\
		&B_{w,I}=B_{I},
		C_{I}=\begin{bmatrix} 0&0&0&0&1	\end{bmatrix},\,D_{I}=0, \,D_{v,I}=1.
		\end{align*}}\normalsize
	\textbf{Cautious Driver} ($i=\textrm{C}$), who tends to yield the lane to the ego car with the input equal to $-K_{d,{{C}}} (v_{x,e}(k)-v_{x,o}(k))-L_{p,C}(\bar{y}-y_e(k))+L_{d,C}v_{y,e}(k)+d_{{C}}(k)$, where $K_{d,C}=0.9$, $L_{p,C}=2.5$ and $L_{d,C}=8.9$ are PD controller parameters, $\bar{y}=2$ and the input uncertainty is $d_{{C}}(k) \in \mathcal{D}_{{C}} \equiv 5\%\cdot\mathcal{U}$:

{\small \vspace*{-0.4cm}\begin{align*}
		&A_{C}= \begin{bmatrix} 1 & \delta t & 0 & 0 & 0\\ 0 & 1 & 0 & 0&0 \\ 0 & 0 & 1 &  0 & 0 \\ 0 & 0 & 0 & 1 & \delta t \\ 0 & -K_{d,{{C}}}\delta t & L_{p,C} \delta t& 0  & 1+K_{d,{{C}}}\delta t  \end{bmatrix},\\&B_{C}=\begin{bmatrix} 0 & 0 &0\\ \delta t & 0 &0\\ 0 & \delta t &0 \\0&0&0\\ 0 &L_{d,C}\delta t&\delta t \end{bmatrix}, f_{C}=\begin{bmatrix} 0\\ 0\\ 0 \\0\\ -L_{p,C}\bar{y}\delta t \end{bmatrix},\\&B_{w,C}=B_{w,I},\ C_{C}=C_{I}, D_{C}=D_{I}, D_{v,C}=D_{v,I}.
		\end{align*}}\normalsize \vspace{-0.5cm}
		
\noindent \textbf{Malicious Driver} ($i=\textrm{M}$), who does not want to yield the lane and  attempts to cause a collision with input equal to $K_{d,{{M}}} (v_{x,e}(k)-v_{x,o}(k))+L_{p,M}(\bar{y}-y_e(k))-L_{d,M}v_{y,e}(k)+d_{{M}}(k)$, if provoked, where $K_{d,M}=1.1$, $L_{p,M}=2.0$ and $L_{d,M}=8.7$ are PD controller parameters, $\bar{y}=2$ and the input uncertainty satisfies $d_M(k) \in \mathcal{D}_M \equiv 5\%\cdot\mathcal{U}$:

{\small \vspace*{-0.4cm} \begin{align*}
		&A_{M}= \begin{bmatrix} 1 & \delta t & 0 & 0 & 0\\ 0 & 1 & 0 & 0&0 \\ 0 & 0 & 1 &  0 & 0 \\ 0 & 0 & 0 & 1 & \delta t \\ 0& K_{d,{{M}}}\delta t & -L_{p,M} \delta t&0 & 1-K_{d,{{M}}} \delta t \end{bmatrix},\\&B_{M}=\begin{bmatrix} 0 & 0 &0\\ \delta t & 0 &0\\ 0 & \delta t &0 \\0&0&0\\ 0 &-L_{d,M} \delta t & \delta t \end{bmatrix} ,f_{M}=\begin{bmatrix} 0\\ 0\\ 0 \\0\\ L_{p,C}\bar{y}\delta t \end{bmatrix},\\&B_{w,M}= B_{w,I}, C_{M}=C_{I},D_{M}=D_{I}, D_{v,M}=D_{v,I}. 
		\end{align*} }\normalsize \vspace{-0.5cm}
		
	\indent  Without loss of generality, we assume that the initial position of the ego car is 0, and the initial position of the other car is constrained by their initial relative distance. The initial velocities of the cars are also constrained to match typical speed limits of the highway. Further, we assume that at the beginning, both cars are close to the center of the lanes. In this case, the initial conditions are as follows: 
\begin{align} \label{eq:initial states-lane}
		\begin{array}{c}
		v_{x,e}(0) \in [30,32]\frac{m}{s} \,\quad 	y_e(0) \in  [1.1,1.8]m\\   
		v_{x,o}(0) \in [30,32]\frac{m}{s},\quad x_o(0) \in  [7,12]m \, \quad \,
		\end{array}
		\end{align}
	
	Moreover, the velocity of the ego vehicle is constrained between $[27,35]\frac{m}{s}$ at all times to obey the speed limit of a highway and the lateral position of the ego vehicle is constrained between $[0.5,2]m$. Process and measurement noise signals are also limited to the range of $[-0.01 , 0.01]$ and the separability threshold is set to $\epsilon=0.25\frac{m}{s}$.

\subsection{Simulation Results and Discussions} 

In this section, we demonstrate the effectiveness of the proposed active intention approach for both of the above-described intersection crossing and lane changing scenarios. In the intersection crossing scenario (cf. Fig. \ref{fig:intersection}), the driver of car $1$ is cautious and intends to stop at the intersection, car $2$ is malicious and attempts to match the ego car's velocity to cause a collision,  while car $3$ corresponds to an inattentive driver who fails to notice the ego vehicle and instead maintains his/her velocity. In this case, the ego car probes the intentions of the other cars by speeding up or slowing down and observing their responses.

On the other hand, in the lane changing scenario (cf. Fig. \ref{fig:lanechange}), the ego car actively nudges into the other lanes in order to discern the other cars' intention based on their responses. In this simulation example, car $1$ is inattentive, car $2$ is cautious and car $3$ is malicious and refuses to yield. 

In both scenarios, the \emph{offline} implementation of our active model discrimination approach yields an optimal separating input that guarantees that the velocities of the other vehicles are different by at least $\epsilon$ under each intention. Then, the optimal separating input is applied in \emph{real-time} and the intention is identified using the model invalidation approach in \cite{Harirchi2015Model}. Fig.  \ref{fig:animation} shows that the active intention identification procedure indeed succeeds at inferring other cars' intentions. 

\begin{figure}[t!]
	\begin{center}
		\subfigure[Intersection crossing scenario. \label{fig:intersection}]{\includegraphics[width=3.27in,trim=2mm 0mm 1mm 0mm]{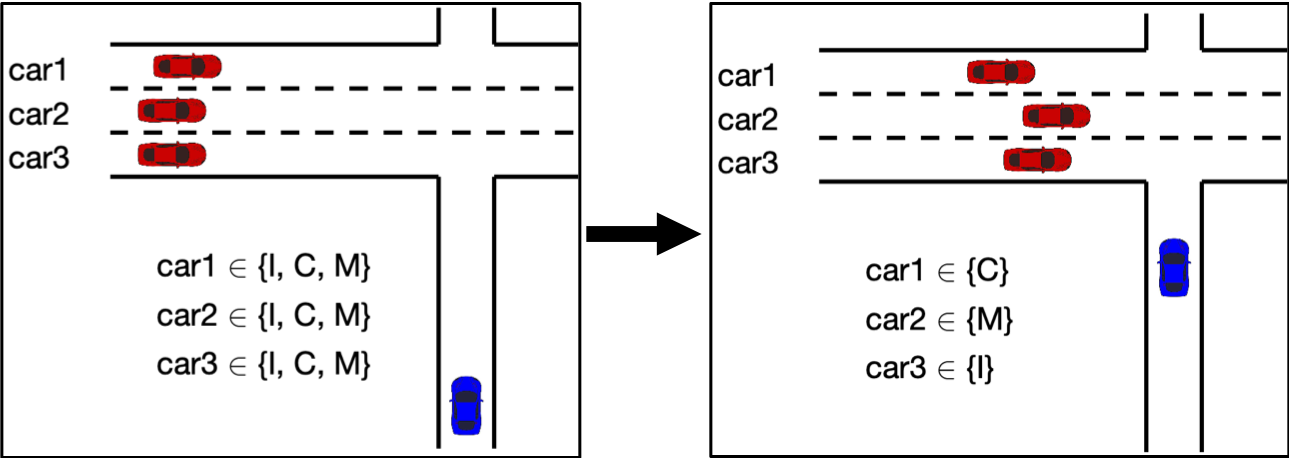}} \\
	\subfigure[Lane changing scenario. \label{fig:lanechange}]
	{\includegraphics[width=3.35in,trim=0mm 5mm 0mm 3mm]{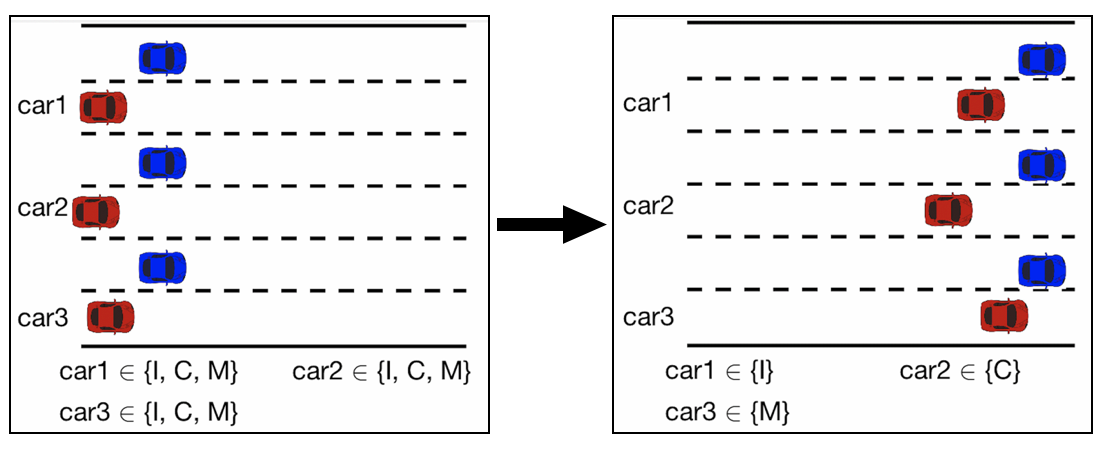}}
		\caption{Snapshots of an animation of the active intention identification approach when implementing the separating input obtained from active model discrimination and the model invalidation algorithm \cite{Harirchi2015Model} for intention inference (cf. \protect\url{https://youtu.be/U79-pjXmTWc} for the full animation video). \label{fig:animation} }
	\end{center}\vspace{-0.2cm}
\end{figure}

\begin{figure}[h!]
	\vspace{-0cm}
	\begin{center}
		\includegraphics[width=3.5in,trim=0mm 15mm 1mm 15mm]{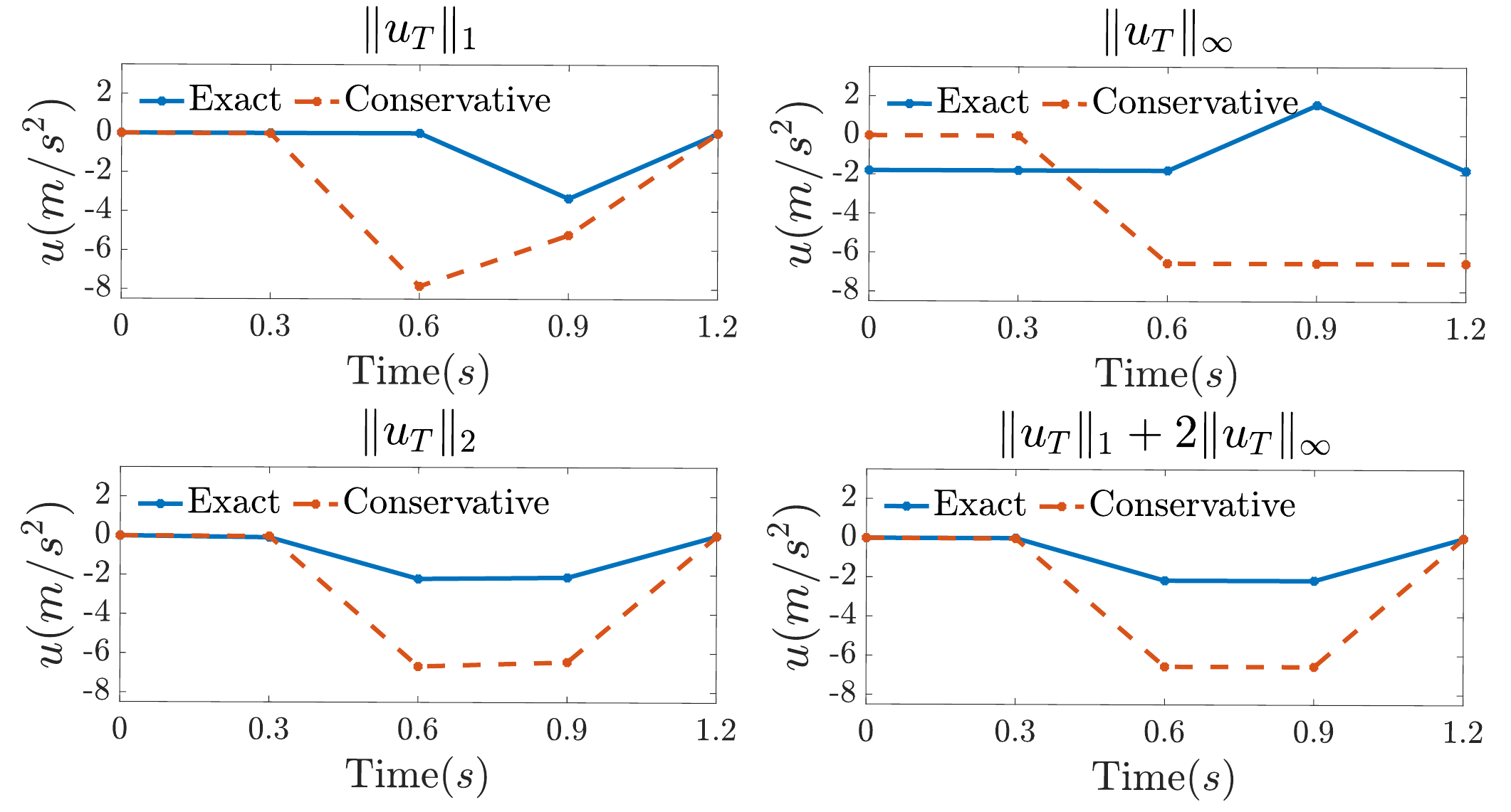} 
		\caption{Intersection crossing scenario: Comparison of optimal separating inputs $u(\frac{m}{s^2})$ with various cost functions.  \label{fig:int_obj} }
	\end{center}
	\begin{center}
	\subfigure[Trajectories of the controlled input $u(\frac{m}{s^2})$.]{
\centering
		\includegraphics[width=3.4in,trim=5mm 7mm 1mm 7mm]{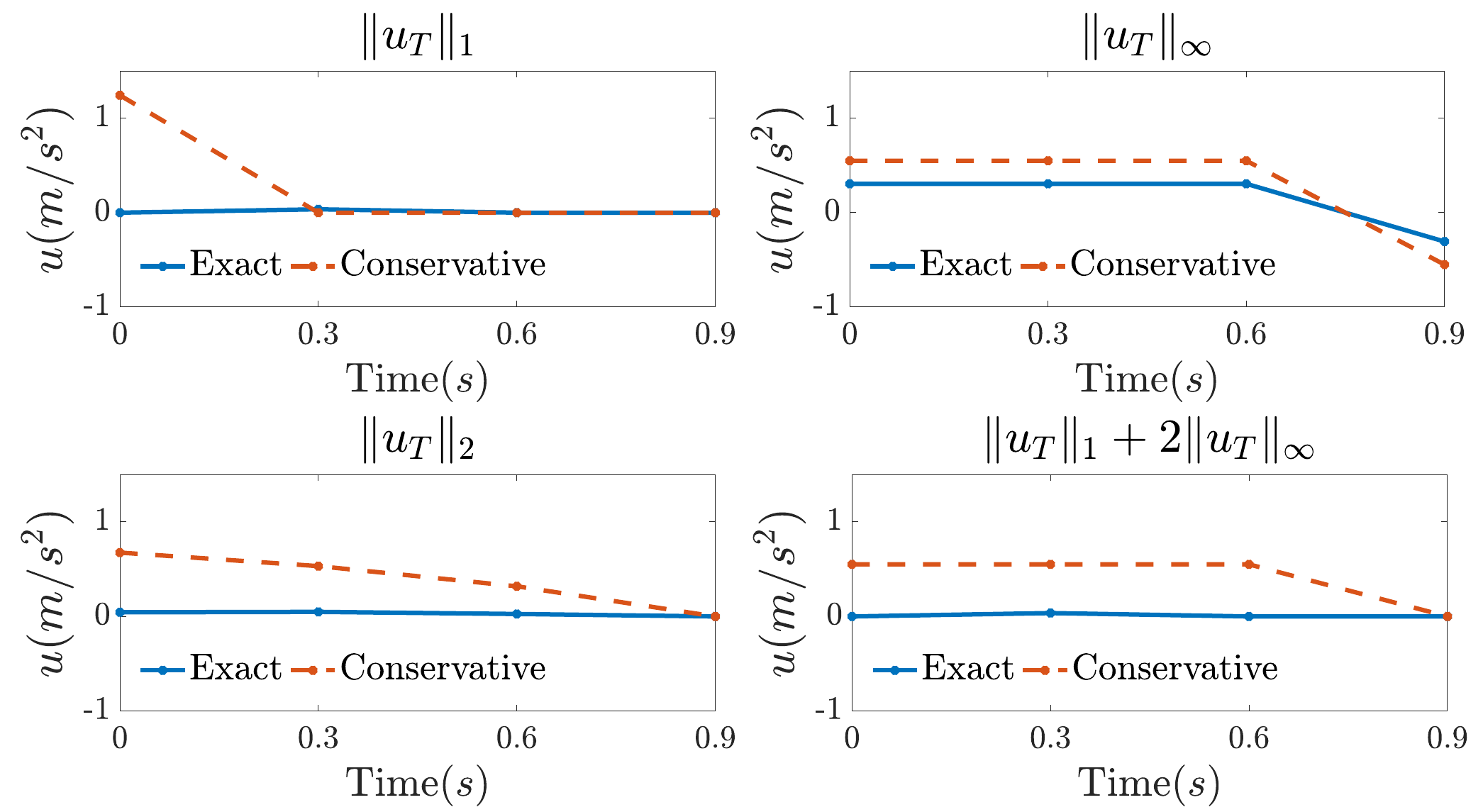}
}
	\subfigure[Trajectories of the controlled input $v_{y,e}(\frac{m}{s})$.]{
\centering
	    \includegraphics[width=3.4in,trim=5mm 7mm 1mm 6mm]{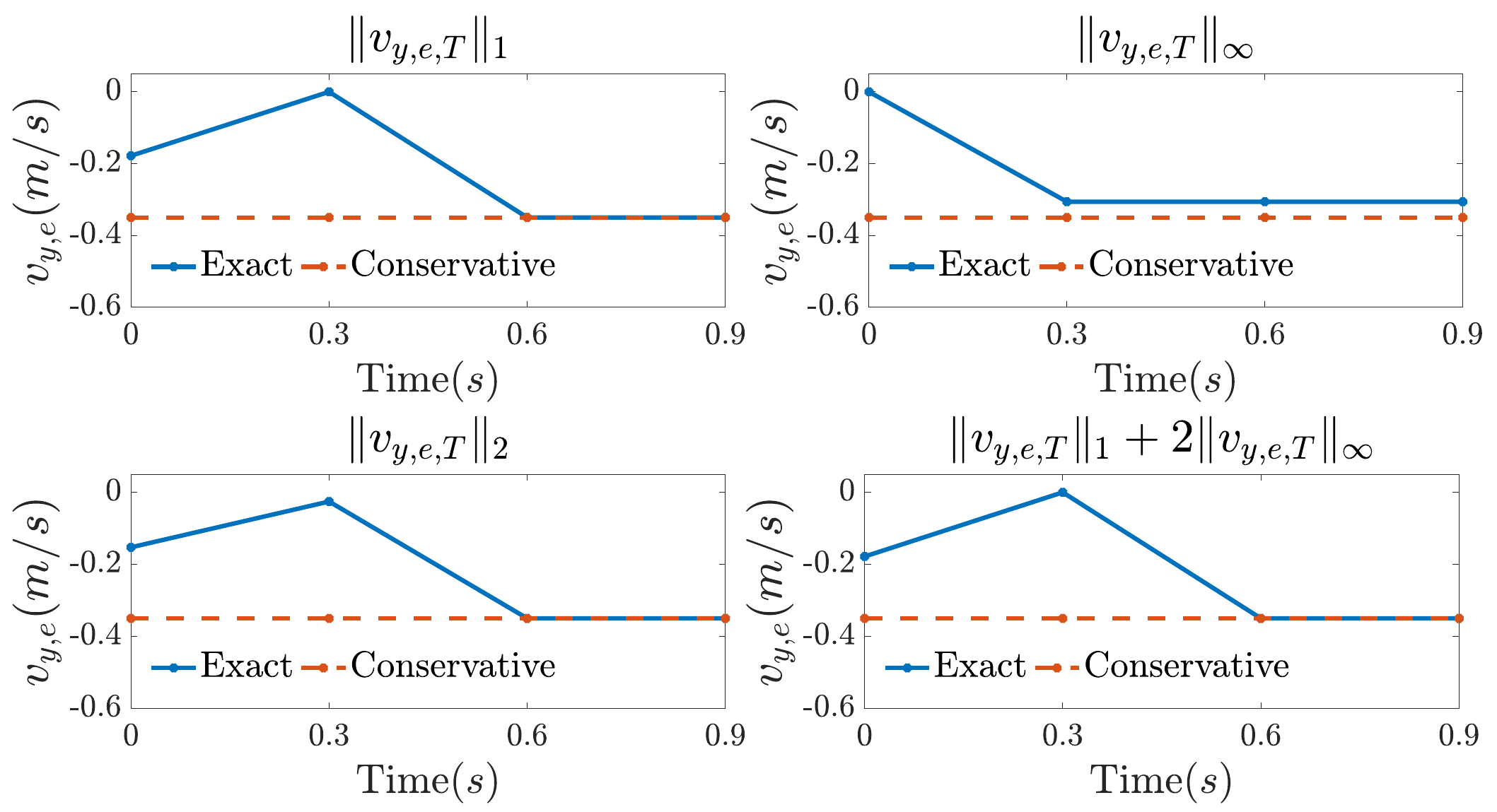} 
}\vspace{-0.15cm}
		\caption{Lane changing scenario: Comparison of optimal separating inputs with various cost functions. \label{fig:lane_obj} }
	\end{center}\vspace{-0.2cm}
\end{figure}

Moreover, we compare the exact and conservative formulations using both scenarios and demonstrate below that the conservative formulation is suboptimal but scalable, while the exact formulation is optimal but computationally costly: 
\subsubsection{\textbf{Optimal separating input for various cost functions}} 
In this case study, we consider several convex cost functions involving the input sequence, including $\|u_{T}\|_1$ that enforces sparsity (leads to minimal number of non-zero inputs), $\|u_{T}\|_2$ that minimizes fuel consumption (smooth acceleration/braking) and $\|u_{T}\|_\infty$ that ensures comfort (with small maximum input amplitudes). We also consider a combination of $\|u_{T}\|_1$ and $\|(\Delta u)_{T}\|_\infty$ that trades-off between sparsity and comfort, where $(\Delta u)_{T}$ is the rate of change in the inputs,  defined as $(\Delta u)_{T} = [u(1)-u(0) \;\; \hdots \;\;  u(T-1)-u(T-2)]^\intercal$. 

We first compare the exact and conservative formulations in the intersection crossing scenario. As seen in Fig. \ref{fig:int_obj}, the separating input designed by the exact formulation (solid line) has a smaller amplitude than the separating input of the conservative formulation (dashed line), implying that exact formulation is the more optimal solution. 
In addition, we observe that both the exact formulation and the conservative formulation design inputs to decelerate the ego car for intention identification.  

Similarly, in the lane changing scenario, we observe that the exact formulation gives a better solution, as expected (cf. Fig. \ref{fig:lane_obj}). In order to discriminate the other driver's intention, the separating input obtained by the conservative formulation requires the ego vehicle to nudge into the other driver's lane with maximum lateral velocity during the entire time horizon, which is very aggressive and dangerous. By contrast, 
the separating input obtained by the exact formulation  is relatively safer with  lateral nudging for shorter time intervals and with smaller lateral velocity. 

%

\subsubsection{\textbf{Trade-off between optimal value and computation time}} As shown in Tables \ref{table:lnt_valuetime} and \ref{table:lane_valuetime}, the exact formulation achieves better objective values at the cost of increased computation time in both driving scenarios we considered. As the primary goal of active intention identification, is to find an intention-revealing input to distinguish among potential driver intentions \emph{offline}, a larger computation time is not a critical issue. Nonetheless, the conservative formulation is still relevant in the cases when online active model discrimination can further improve the separating input when more information becomes available.  

\begin{table}[ht]
	\renewcommand{\arraystretch}{1} 
	\caption{Intersection Crossing Scenario}  \vspace*{-0.2cm} 
	\centering 
	\begin{tabular}{C{2cm} C{1cm} c C{1cm} c }
		\hline
		\multirow{2}{*}{}        & \multicolumn{2}{c }{Exact} & \multicolumn{2}{c}{Conservative} \\ \cline{2-5} 
		Cost Function & Optimal Value        & Time($s$)       & Optimal Value         & Time($s$)      \\ \hline
		$||u||_1$                 & 3.374     &24.89        & 13.108     & 2.75          \\ 
		$||u||_2$                 & 3.053      & $>$5000        & 9.271     & 2.84        \\ 
		$||u||_{\infty}$          & 1.804     &99.75 & 6.556      & 3.40       \\ 
		$||u||_1+2||u||_{\infty}$ & 8.660         & 85.43        &26.267   & 2.74       \\ \hline
	\end{tabular}
	\label{table:lnt_valuetime} 
\end{table}


\begin{table}[ht]
		\renewcommand{\arraystretch}{1} 
	\caption{Lane Changing Scenario}  \vspace*{-0.2cm}
	\centering 
	\begin{tabular}{ C{2cm} C{1cm} c C{1cm} c }
		\hline
		\multirow{2}{*}{}         & \multicolumn{2}{c }{Exact} & \multicolumn{2}{c }{Conservative} \\ \cline{2-5} 
		Cost Function& Optimal Value        & Time($s$)       & Optimal Value         & Time($s$)      \\ \hline
		$||u||_1$                 		& 0.914     	& 138.77       &2.645      	&   3.21        \\ 
		$||u||_2$                 		&  0.523     	& 3435.31     & 1.153    	&  3.09       \\ 
		$||u||_{\infty}$          		& 0.306 		&  140.22      & 0.551    	&  2.72    \\ 
		$||u||_1+2||u||_{\infty}$ 	& 1.614           	& 69.72         & 4.155         & 3.23       \\ \hline
	\end{tabular}
	\label{table:lane_valuetime} 
\end{table}

\section{Conclusion} \label{sec:conclude}

%

In this paper, a novel model-based optimization approach is proposed to find a safe and optimal 
discriminating input, which guarantees the distinction among multiple affine models with uncontrolled inputs and noise. This approach improves on a previous robust optimization formulation that turned out to be a feasible but conservative solution at the cost of increased computational complexity.
The new active model discrimination problem can be recast as a bilevel optimization problem and subsequently as a tractable MILP problem  with SOS-1 constraints.
To illustrate the efficiency of this framework, we successfully applied the proposed active model discrimination approach to the problem of intention identification of other human-driven or autonomous vehicles in scenarios of intersection crossings and lane changing. {In the future, we plan to develop methodologies to learn intention models from data and then apply the proposed active model discrimination approach to distinguish the learned intention models.} 
\section*{Acknowledgment}

The authors would like to thank Dr. Qiang Shen from Arizona State University for 
pointing out and correcting an omission in \eqref{eq:20}, and his help in updating the manuscript (including the addition of Assumption \ref{assump:1}) and simulations accordingly.

\balance

\bibliographystyle{unsrt}
\bibliography{biblio,fault}

\appendix \label{appendix}

In this appendix, we provide definitions of matrices and vectors that were previously omitted to improve readability.
\subsection{Time-Concatenated Matrices and Vectors in Section \ref{sec:model}:}
\vspace{-0.5cm}
\begin{align}
\nonumber&\barr{A}_{i,T} = \begin{bmatrix}
A_{i} \\
A_{i}^2 \\
\vdots \\
A_{i}^T
\end{bmatrix}\hspace{-0.05cm}, \quad
\Theta_{i,T} = \begin{bmatrix}
\mathbbm{I}& 0 & \dotsm & 0 \\
A_{i} & \mathbbm{I} & \dotsm & 0 \\
\vdots && \ddots & \\
A_{i}^{T-1} & A_{i}^{T-2} & \dotsm & \mathbbm{I}
\end{bmatrix}, \\
\nonumber& \barr{f}_{i,T}=\displaystyle\vect_T\{f_i\},\  \tilde{f}_{i,T}=\Theta_{i,T} \barr{f}_{i,T}, \ \tilde{g}_{i,T} = \displaystyle\vect_T \{ g_i \},  \\
\nonumber&E_i = \displaystyle\diag_T \{ C_i\}, \ F_{u,i} = \displaystyle\diag_T \{ D_{u,i}\},\\ 
\nonumber & F_{d,i} = \displaystyle\diag_T \{ D_{d,i}\}, \ F_{v,i} = \displaystyle\diag_T \{ D_{v,i}\}. \\
\nonumber & \text{For }\dagger=\{x,y\} \text{ and } \star=\{u,d,w\}:\\
\nonumber& B_{\star,i}=\begin{bmatrix} B_{x\star,i} \\ B_{y\star,i} \end{bmatrix}, \  B_{\dagger \star,\text{d},i,T}=\displaystyle\diag_T\{ B_{\dagger \star,i}\},\\
&\nonumber \Gamma_{\star,i,T} \hspace{-0.05cm}=\hspace{-0.05cm} \begin{bmatrix}
B_{\star,i} & 0 & \dotsm & 0 \\
A_{i}B_{\star,i} & B_{\star,i} & \dotsm & 0 \\
\vdots && \ddots & \\
A_{i}^{T-1}B_{\star,i} & A_{i}^{T-2}B_{\star,i} & \dotsm & B_{\star,i}
\end{bmatrix}\hspace{-0.05cm},\\
 &\nonumber A_{\dagger,\text{d},i,T}=\displaystyle\diag_T\{\begin{bmatrix} A_{\dagger x,i} & A_{\dagger y,i} \end{bmatrix}\},  \\
\nonumber& M_{\dagger,i,T} = A_{\dagger,\text{d},i,T}\begin{bmatrix} \mathbbm{I} \\ \barr{A}_{i,T-1} \end{bmatrix}, \ \barr{f}_{\dagger,i,T}= \vect_T \{f_{\dagger,i}\},\\
\nonumber&
\tilde{f}_{\dagger,i,T} = A_{\dagger,\text{d},i,T}\begin{bmatrix} 0 \\ \Theta_{i,T-1} \end{bmatrix}\barr{f}_{i,T-1} + \barr{f}_{\dagger,i,T}, \\ 
\nonumber&  \Gamma_{\dagger \star,i,T} = A_{\dagger,\text{d},i,T}\begin{bmatrix}
0 & 0 \\
\Gamma_{\star,i,T-1} & 0
\end{bmatrix} + B_{\dagger \star,\text{d},i,T}.
\end{align}


\subsection{Matrices and Vectors in Theorem \ref{thm:nonconservative}:}
\vspace{-0.5cm}
\begin{align*}
&\barr{A}^\iota=\displaystyle\diag_{i,j} \{\barr{A}_{i,T}\},  \   \barr{C}^\iota = \displaystyle\diag_{i,j}  \{ E_i\}, \ \tilde{f}^\iota =\displaystyle\vect_{i,j} \{\tilde{f}_{i,T}\},  \\
&  \tilde{g}^\iota = \displaystyle\vect_{i,j} \{\tilde{g}_{i,T}\}, \ \Gamma_u^\iota = \displaystyle\vect_{i,j} \{\Gamma_{u,i,T}\}, \\
&    \Gamma_d^\iota = \displaystyle\diag_{i,j}N \{\Gamma_{d,i,T}\}, \ \Gamma_w^\iota=\displaystyle\diag_{i,j} \{\Gamma_{w,i,T}\},  \\
&\nonumber    \barr{D}_u^\iota = \vect_{i,j} \{ F_{u,i}\}, \ \barr{D}_d ^\iota= \diag_{i,j} \{ F_{d,i}\}, \ \barr{D}_v^\iota = \diag_{i,j}  \{ F_{v,i}\},\\
& \Lambda ^\iota= \barr{E}^\iota\begin{bmatrix} \barr{A}^\iota & \Gamma_d ^\iota& \Gamma_w ^\iota& \bm{0} \end{bmatrix} + \begin{bmatrix} \bm{0}  & \barr{F}_d^\iota &\bm{0}  & \barr{F}_v^\iota \end{bmatrix},\\
&\barr{E}^\iota = \begin{bmatrix}
E_i & -E_j \\
-E_i & E_j  \\
\end{bmatrix}.\\
& \text{For }\dagger=\{x,y\}:\\
& \Gamma_{\dagger u}^\iota =\vect_{i,j} \{\Gamma_{\dagger u,i,T} \}, \ \Gamma_{\dagger d}^\iota =\diag_{i,j} \{\Gamma_{\dagger d,i,T} \}, \\
& 
\Gamma_{\dagger w}^\iota=\displaystyle\diag_{i,j} \{\Gamma_{\dagger w,i,T}\}, \ 
M_\dagger^\iota=\diag_{i,j} \{M_{\dagger,i,T}\},\\
& \tilde{f}_{\dagger} ^\iota=\vect_{i,j} \{\tilde{f}_{\dagger,i,T}\}. \\
\nonumber & \text{For }\ast=\{d,v\}:\\
&\barr{F}_\ast^\iota  = \begin{bmatrix}
F_{\ast,i} & -F_{\ast,j} \\
-F_{\ast,i} & F_{\ast,j}  
\end{bmatrix}, \ \barr{F}_u^\iota = \begin{bmatrix}
F_{u,i} -F_{u,j}\\
F_{u,j} -F_{u,i}
\end{bmatrix}, \\
&  \barr{g}^\iota = \begin{bmatrix}
\tilde{g}_i - \tilde{g}_j \\
-\tilde{g}_i + \tilde{g}_j 
\end{bmatrix}.  
\end{align*}

\subsection{Matrices and Vectors in Theorem \ref{thm:final_formulation}:}
\vspace{-0.4cm}
\begin{align*}
&\barr{A}=\displaystyle\diag_{i=1}^N \{\barr{A}_{i,T}\},  \   \barr{C} = \displaystyle\diag_{i=1}^N  \{ E_i\}, \ \tilde{f} =\displaystyle\vect_{i=1}^N \{\tilde{f}_{i,T}\},    \\
&  \tilde{g} = \displaystyle\vect_{i=1}^N \{\tilde{g}_{i,T}\}, \
 \Gamma_u = \displaystyle\vect_{i=1}^N \{\Gamma_{u,i,T}\}, \\
&   \Gamma_d = \displaystyle\diag_{i=1}^N \{\Gamma_{d,i,T}\},  \ \Gamma_w=\displaystyle\diag_{i=1}^N \{\Gamma_{w,i,T}\},  \\
&\nonumber    \barr{D}_u = \vect_{i=1}^N \{ F_{u,i}\}, \ \barr{D}_d = \diag_{i=1}^N \{ F_{d,i}\}, \ \barr{D}_v = \diag_{i=1}^N  \{ F_{v,i}\},\\
& \Lambda = \barr{E}\begin{bmatrix} \barr{A} & \Gamma_v & \Gamma_w & \bm{0} \end{bmatrix} + \begin{bmatrix} \bm{0}  & \barr{F}_v &\bm{0}  & \barr{F}_v \end{bmatrix}, \\
& \lambda(u_T,s) = \epsilon\mathbbm{1} - s - \barr{g} - (\barr{E}\Gamma_u + \barr{F}_u) u_T - \barr{E}\tilde{f}, \\
	&R \hspace{-0.0cm}=\hspace{-0.0cm} \begin{bmatrix} -\Lambda \\ H_x \end{bmatrix}\hspace{-0.05cm}, \ r(u_T,s) \hspace{-0.0cm}=\hspace{-0.0cm} \begin{bmatrix} -\lambda(u_T,s) \\ h_x(u_T) \end{bmatrix}\hspace{-0.05cm}, \
	\Phi \hspace{-0.0cm}=\hspace{-0.0cm} \begin{bmatrix} H_y \\ H_{\bar{x}} \end{bmatrix}\hspace{-0.05cm},\\
	&  \psi(u_T) \hspace{-0.075cm}=\hspace{-0.075cm} \begin{bmatrix} h_y(u_T) \\ h_{\bar{x}} \end{bmatrix},\\
	 &E = \begin{bmatrix}
E_1 & -E_2 & 0 & \dotsm & \dotsm & 0 \\
E_1 & 0 & -E_3 & 0 & \dotsm & 0 \\
\vdots \\
0 & \dotsm & \dotsm & 0 & E_{N-1} & -E_{N} \\
\end{bmatrix},\\
\nonumber& \barr{E}= \begin{bmatrix} E \\ -E \end{bmatrix}, \quad \barr{F}_u= \begin{bmatrix} F_u \\ -F_u \end{bmatrix}, \quad \barr{F}_d= \begin{bmatrix} F_d \\ -F_d \end{bmatrix},\\ &\barr{F}_v= \begin{bmatrix} F_v \\ -F_v \end{bmatrix}, \quad \barr{g}= \begin{bmatrix} g \\ -g \end{bmatrix},\\
& F_u = \begin{bmatrix}
F_{u,1} -F_{u,2}\\
F_{u,1}  -F_{u,3} \\
\vdots \\
F_{u,N-1}  -F_{u,N} \\
\end{bmatrix}\hspace*{-0.0cm}, \ g = \begin{bmatrix}
\tilde{g}_1 - \tilde{g}_2 \\
\tilde{g}_1 - \tilde{g}_3 \\
\vdots \\
\tilde{g}_{N-1} - \tilde{g}_N
\end{bmatrix},  \\
\nonumber &s_{i,j,\alpha}=\begin{bmatrix}\vect_{l=1}^p \{s_{i,j,1,l,\alpha}\}  \\ \vdots \\ \vect_{l=1}^p \{s_{i,j,T,l,\alpha}\}\end{bmatrix},
\ 
s_\alpha= \begin{bmatrix} \vect_{j=2}^N \{s_{1,j,\alpha}\} \\ \vect_{j=3}^N \{s_{2,j,\alpha}\}  \\ \vdots  \\ s_{N-1,N,\alpha} \end{bmatrix}.\\
& \text{For }\dagger=\{x,y\}:\\	
&\Gamma_{\dagger u} =\vect_{i=1}^N \{\Gamma_{\dagger u,i,T} \}, \  \Gamma_{\dagger d} =\diag_{i=1}^N \{\Gamma_{\dagger d,i,T} \}, 
\\
& \Gamma_{\dagger w}=\displaystyle\diag_{i=1}^N \{\Gamma_{\dagger w,i,T}\}, \ M_\dagger=\diag_{i=1}^N \{M_{\dagger,i,T}\},\\
& \tilde{f}_{\dagger} =\vect_{i=1}^N \{\tilde{f}_{\dagger,i,T}\}.\\
\nonumber & \text{for }\ast=\{d,v\}:\\
&F_\ast = \begin{bmatrix}
F_{\ast,1} & -F_{\ast,2} & 0 & \dotsm & \dotsm & 0 \\
F_{\ast,1} & 0 & -F_{\ast,3} & 0 & \dotsm & 0 \\
\vdots \\
0 & \dotsm & \dotsm & 0 & F_{\ast,N-1} & -F_{\ast,N} \\
\end{bmatrix}.
\end{align*}

\end{document}